\documentclass[11pt,a4paper]{amsart}
\usepackage{a4wide}
\usepackage{amsfonts, amsthm, amsmath, amssymb, enumerate}
\usepackage{algorithm, algpseudocode, algorithmicx}
\usepackage{booktabs, array, colortbl, multirow}
\usepackage{graphicx}
\usepackage{tikz, pgfplots}\pgfplotsset{compat=newest}
\usepackage{url}
\DeclareRobustCommand{\pmod}[1]{\allowbreak\mathchoice{\mskip6mu}{\mskip3mu}{\mskip1mu}{\mskip0mu}\text{(mod$\mskip6mu#1$)}}
\def\sfname#1{\text{\sffamily\upshape#1}}
\DeclareRobustCommand{\gcd}{\mathchoice{\sfname{\small GCD}}{\sfname{\fontsize{8}{4}\selectfont GCD}}{\sfname{\tiny GCD}}{\sfname{\fontsize{5}{2}\selectfont GCD}}}
\newtheorem*{LehmerTheorem}{Lehmer Theorem}
\newtheorem{MainTheorem}{Theorem}

\newtheorem{theorem}{Theorem}[section]
\newtheorem{lemma}[theorem]{Lemma}
\newtheorem{corollary}[theorem]{Corollary}
\newtheorem{prop}[theorem]{Proposition}
\newtheorem{conj}[theorem]{Conjecture}
\theoremstyle{definition}
\newtheorem{definition}[theorem]{Definition}
\newtheorem{remark}[theorem]{Remark}

\newcommand{\N}{\ensuremath{\mathbb{N}}}
\newcommand{\R}{\ensuremath{\mathbb{R}}}
\newcommand{\Z}{\ensuremath{\mathbb{Z}}}
\newcommand{\ZZ}{\ensuremath{\mathcal{Z}}}
\newcommand{\SI}{\ensuremath{\mathbb{S}^1}}
\newcommand{\floor}[1]{\ensuremath{\lfloor #1\rfloor}}
\newcommand{\elasticfloor}[1]{\ensuremath{\left\lfloor #1 \right\rfloor}}

\newcommand{\elasticset}[2]{\ensuremath{\left\{#1 \,\colon #2\right\}}}
\makeatletter
\def\@map#1#2[#3]{\mbox{$#1 \colon #2 \longrightarrow #3$}}
\def\map#1#2{\@ifnextchar [{\@map{#1}{#2}}{\@map{#1}{#2}[#2]}}
\makeatother
\title{Mersenne numbers and the doubling map}
\date{April 21, 2026}

\author{Llu\'{\i}s Alsed\`a, Antonio Garijo \and Xavier Jarque}
\address{Llu\'{\i}s Alsed\`a:
         Departament de Matem\`atiques,
         Edifici Ciències,
         Universitat Autònoma de Bar\-ce\-lo\-na,
         08193 Cerdanyola del Vallès, Barcelona, Spain}
\email{lluis.alseda@uab.cat}

\address{Antonio Garijo:
Departament d'Enginyeria Inform\`atica i Matem\`atiques,
Universitat Rovira i Virgili, 43007 Tarragona, Spain}
\email{antonio.garijo@urv.cat}

\address{Xavier Jarque:
Departament de Matem\`atiques i Inform\`atica,
Universitat de Barcelona,
Gran Via, 585,
08007 Barcelona and
Centre de Recerca Matem\`atica,
Edifici Ciències, Campus de Bellaterra,
Universitat Autònoma de Barcelona,
08193 Bellaterra, Catalonia}
\email{xavier.jarque@ub.edu}

\thanks{\emph{Acknowledgements:\/}
The first author is supported by the grant PID2023-146424NB-I00 of Ministerio de Ciencia e Innovaci\'on.
The second author is supported by the project 2021SGR-633 funded by Generalitat de Catalunya.
The third author is supported by the project PID2023-147252NB-I00 funded by
MICIU/AEI MCIN/AEI/10.13039/501100011033 and by FEDER, UE and it is also funded by the
Spanish State Research Agency, through the Severo Ochoa and Mar\'ia de Maeztu Program for Centers and Units of Excellence in R\&D (CEX2020-001084-M)}
\subjclass{Primary: 37E10, 11Y55 }
\keywords{Mersenne numbers, number theory, doubling map, periodic points}
\begin{document}
\begin{abstract}
We study the connection between the \emph{Mersenne numbers} $M(n) = 2^n - 1$ and
the dynamics of the angle-doubling map $\theta \mapsto 2\theta \pmod{1}$.
Within this framework, we develop an algorithm to compute divisors of
Mersenne numbers without explicitly evaluating $M(n)$.
Determining whether $M(n)$ is prime for a prime $n$
(and knowing if there are infinitely many of them), is a central problem,
traditionally addressed with the help of the Lucas–Lehmer test.
We provide an alternative approach based on dynamical methods.
As an application, we prove that $M(2{,}199{,}023{,}254{,}451)$
(with approximately $6.6 \times 10^{11}$ digits) is composite by exhibiting a
non-trivial divisor.
\end{abstract}
\maketitle
\section{Introduction}\label{sec:introduction}
A central goal in mathematics is the study of sequences of natural numbers
defined by a variety of properties, such as arithmetic or primality conditions,
congruences, or recursive formulas.
The jewel in the crown is the sequence of prime numbers itself,
which is deeply connected to one of the most famous conjectures in mathematics,
\emph{the Riemann Hypothesis}. Indeed, the Fundamental Theorem of Arithmetic states
that every natural number can be expressed uniquely as a product of prime numbers.
Moreover, large prime numbers play a crucial role in modern cryptography,
see \cite{RSA,DH,Mughal2024}.

The French mathematician Marin Mersenne (1588 -- 1648) \cite{Mer,Mer2} was the pioneer
in studying the infinite sequence of natural numbers $M(n) := 2^n-1$,
today known as \emph{Mersenne numbers}.

One can easily check that if $a,b,c \in \N$, then
\[
  a^{bc} - 1 =
    	\Bigl(a^b - 1\Bigr)\Bigl(1+a^b+a^{2b}+\dots+a^{(c-1)b}\Bigr) =
	    \Bigl(a^c - 1\Bigr)\Bigl(1+a^c+a^{2c}+\dots+a^{(b-1)c}\Bigr).
\]
Consequently, if  $n=bc$ is composite, then $M(n)$ is also composite. However, when
$n$ is prime, the corresponding Mersenne number $M(n)$ may or may not be prime. For example,
$M(2)=3$, $M(3)=7$ and $M(31)=2,147,483,647$ are prime,
whereas $M(11)=2^{11}-1=2,047=23 \times 89$ is composite.
It still remains as an open question whether the sequence of Mersenne primes is finite or infinite.

The largest prime number known to date is the Mersenne number $M(136,279,841)$,
discovered by Luke Durant in October 2024.
As a curiosity, this number has approximately 41 million decimal digits.
Currently, the exponent $n=77,232,917$ (corresponding to the 50th Mersenne prime)
is the largest exponent below which all candidates to be prime Mersenne numbers have been independently verified.
It has not yet been confirmed whether any undiscovered Mersenne primes exist between $M(77,232,917)$ (the 50st) and
$M(136,279,841)$ (the 52nd).
For the most up-to-date information, see the Great Internet Mersenne Prime Search \cite{GIMPS}.

The GIMPS project has discovered a total of eighteen Mersenne primes,
sixteen of which were the largest known prime numbers at the time of their discovery.
Clearly, the search for large prime numbers is closely tied to the search for large Mersenne primes.

Although there are many theoretical equivalences to the primality of $M(n)$
there are no simple tests (in the sense of being useful for computational purposes)
to determine whether $M(n)$ is prime for a given prime exponent $n$.

The GIMPS search is based on the Lucas-Lehmer test \cite{lehmer1935lucas, BLS}, a direct consequence of the following theorem.

\begin{LehmerTheorem}
Let $p>2$ be a prime number.
The Mersenne number $M(p)=2^p-1$ is prime if and only if $S_{p-2}$ is zero,
where $\{S_k\}_{k\geq 0}$ is the sequence defined recursively by
$S_0 = 4$ and $S_k = \Bigl[\bigl(S_{k-1}^2-2\bigr)
  \operatorname{\textsf{\upshape\small{}modulo}} M(p)\Bigr]$ for $k>0$.
\end{LehmerTheorem}

Notice that checking the condition stated in the above theorem requires to
compute squares and modulus of arbitrarily large numbers.
In any event, to prove for instance that $M(7)=2^7-1=127$ is a prime number
it is enough to prove that $S_5$ is zero.
Easy computations show that
\begin{align*}
S_0 & = 4                  & S_3 & = 4487\phantom{9}\pmod{127} = 42\\
S_1 & = 14                 & S_4 & = 1762\phantom{9}\pmod{127} = 111\\
S_2 & = 194\pmod{127} = 67 & S_5 & = 12319\pmod{127} = 0;
\end{align*}
thus concluding that $M(7)$ is prime.

In this note we propose an alternative approach
to the determination of the primality of large Mersenne numbers
based on the study of the \emph{circle doubling map}
$\delta(\theta)=2 \theta \pmod{1}$.
This map is the natural model for the expanding covering maps of degree two defined on the unit circle $\SI$, whose iterates defined a chaotic discrete dynamical system (see Section~\ref{SEC::DoublingMap} for precise definitions, and \cite{OneDimensionalDynamics} as an excellent reference over the whole theory).

The theoretical result underlying our algorithms for determining the primality of Mersenne numbers with prime exponent admits a simple formulation, yet establishes a remarkable connection between the dynamics of $\delta$ and the arithmetic structure of Mersenne numbers. More precisely, it relates the fixed points of $\delta^n$ to the divisors of $M(n)$, whenever such divisors exist.

In what follows, we denote by $\floor{\cdot}$ the integer part function.

\begin{MainTheorem}\label{MainTheorem::A}
Let $n>1$ be a natural number.
\begin{enumerate}[(a)]
\item An odd number
      $q \in \bigl\{3,5,7,\dots,M(n)-2\bigr\}$ is a divisor of
      $M(n)$ if and only if $\frac{1}{q} \in \SI$ is a fixed point of $\delta^n$.
\item The Mersenne number $M(n)$ is prime if and only if the exponent $n$ is prime and, for every $q \in \biggl\{3,5,7,\dots,\elasticfloor{\sqrt{M(n)}}\biggr\}$ odd, $\frac{1}{q} \in \SI$ is $\delta-$periodic with period $k_q\ne n$.
\end{enumerate}
\end{MainTheorem}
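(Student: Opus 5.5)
The plan is to reduce everything to the elementary identity $\delta^n(\theta) = 2^n\theta \pmod 1$ on $\SI = \R/\Z$. A point $\theta$ is fixed by $\delta^n$ exactly when $(2^n-1)\theta \in \Z$, that is, when $M(n)\,\theta\in\Z$.

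For part (a), take $\theta = \frac1q$ with $q$ odd and $3\le q\le M(n)-2$. Then $\frac1q$ is a fixed point of $\delta^n$ if and only if $\frac{M(n)}{q}\in\Z$, which holds if and only if $q \mid M(n)$. This is immediate. The range restriction only serves to exclude the trivial divisors $1$ and $M(n)$; oddness is automatic, since $M(n)$ is odd.

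For part (b), I would first record the standard fact about orbits. Since $q$ is odd, $2$ is invertible modulo $q$, so $\frac1q$ is $\delta$-periodic. Its exact period $k_q$ is the multiplicative order of $2$ modulo $q$, because $\delta^k(\frac1q)=\frac1q$ if and only if $q\mid 2^k-1$. By part (a), $\frac1q$ is fixed by $\delta^m$ exactly when $k_q\mid m$.

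\emph{($\Rightarrow$)} Suppose $M(n)$ is prime. Then $n$ is prime by the factorization identity from the introduction. Let $q$ be odd with $3\le q\le \floor{\sqrt{M(n)}}$. Then $q$ does not divide $M(n)$, since $q < M(n)$ and $q\neq 1$. By part (a), $\frac1q$ is not fixed by $\delta^n$, so $k_q\nmid n$, and in particular $k_q\neq n$.

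\emph{($\Leftarrow$)} Suppose $n$ is prime and $k_q\ne n$ for every odd $q$ in the range, and assume for contradiction that $M(n)$ is composite.
\begin{itemize}
\item $M(n)$ then has a prime divisor $q\le \floor{\sqrt{M(n)}}$, and $q$ is odd with $q\ge 3$. We need $\sqrt{M(n)}\ge 3$ for the range to be nonempty, which holds whenever $M(n)$ is composite.
\item By part (a), $\frac1q$ is fixed by $\delta^n$, so $k_q\mid n$.
\item Since $n$ is prime, $k_q\in\{1,n\}$.
\item But $k_q=1$ would mean $q\mid 2-1=1$, which is impossible. Hence $k_q=n$, a contradiction.
\end{itemize}

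The only mildly delicate step is exactly this use of primality of $n$. It converts ``$\frac1q$ is fixed by $\delta^n$'' into ``the period of $\frac1q$ is exactly $n$'', which is what makes the condition $k_q\neq n$ equivalent to $q\nmid M(n)$ within the stated range of $q$. Everything else is the direct translation of fixed points of $\delta^n$ into divisibility by $M(n)$.
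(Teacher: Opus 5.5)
Your proof is correct and takes the same route as the paper. Part (a) is the direct translation $\delta^n(\tfrac1q)=\tfrac1q \iff q\mid 2^n-1$, and part (b) is deduced from (a) together with the fact that a composite $M(n)$ has an odd prime divisor at most $\sqrt{M(n)}$. Your version of (b) is more complete than the paper's one-line deduction, because you state two points it leaves implicit: primality of $M(n)$ forces $n$ to be prime, and for $n$ prime, $k_q\mid n$ with $k_q\neq 1$ forces $k_q=n$.
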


In the particular case where $n$ is prime, every fixed point of $\delta^n$ necessarily corresponds to a periodic point of $\delta$ with exact period $n$. This observation is fundamental for our approach, since it allows the primality problem for Mersenne numbers to be reformulated in terms of the periodic dynamics of the map $\delta$.
More precisely, if $n$ is prime then Theorem~\ref{MainTheorem::A}(a) is equivalent to the statement
\begingroup
\itshape
\begin{enumerate}[(a')]
\item Assume $n$ is prime. An odd number $q \in \bigl\{3,5,7,\dots,M(n)-2\bigr\}$ is a divisor of  $M(n)$ if and only if $\frac{1}{q} \in \SI$ is $\delta-$periodic of period $n$.
\end{enumerate}
\endgroup

\begin{remark}\label{rem:period_M_n}
If in Theorem~\ref{MainTheorem::A}(a) we take $q = M(n)$, we have
\[
2^n \tfrac{1}{q} \pmod{1} =
   (2^n-1+1)\frac{1}{2^n-1} \pmod{1} =
   \left(1 + \frac{1}{2^n-1}\right)\pmod{1} =
   \frac{1}{2^n-1}.
\]
In other words, $\frac{1}{M(n)}$ is $\delta-$periodic of period $n$, for all $n > 1$.
This justifies the assumption that, in Theorem~\ref{MainTheorem::A}(a),
$q$ must be smaller than $M(n).$
\end{remark}

Beyond our theorem, the relationship between Mersenne numbers and dynamical concepts has been explored extensively in the literature. For example, in \cite{MalWhi}, the authors employed classical symbolic dynamics associated with the angle-doubling map over the $p$-adic numbers, for $p$ prime, to investigate the primality of $M(p)$. Likewise, \cite{KHAA} introduced a generalization of Mersenne numbers, known as $k$-Mersenne numbers, in connection with the study of certain difference equations. Our approach differs from these perspectives in that it relates the arithmetic properties of Mersenne numbers directly to the periodic dynamics of the map $\delta$, thereby providing a new dynamical framework for addressing questions of primality and divisibility.

In light of Theorem~A, we have been able to develop a collection of algorithms for investigating the primality (or not) of $M(n)$ through the computation of periodic points of $\delta$, without requiring the explicit evaluation of the Mersenne number $M(n)$ itself. Theorem~A is particularly valuable in this context because it transforms the arithmetic problem of determining the primality of $M(n)$ into a dynamical problem involving the efficient computation of $\delta$-periods.

Since these period computations can be implemented very efficiently,
the resulting algorithms lead to non-trivial and computationally effective
programs for testing the (non) primality of Mersenne numbers with prime exponent.
Furthermore, the flexibility of the method suggests the possibility of additional
refinements and optimisations.
For these reasons, we discuss the algorithms and their implementation
in detail in Section~\ref{SEC::Algos}.

Moreover, the complete collection of programs, written in the C programming language,
together with the corresponding computational outputs, is freely available in the public repository
{\color{blue}\url{https://github.com/CombTopDynamics-cat/Mersenne-doubling.git}}

The paper is organised as follows.
In Section~\ref{SEC::DoublingMap}, we introduce the notion of discrete dynamical system,
and summarise several properties of the doubling map on the unit circle.
In Section~\ref{section:theoremAB}, we prove Theorem~\ref{MainTheorem::A},
and discuss their theoretical implications.
In Section~\ref{SEC::Algos}, we present algorithms to compute periods under the doubling map
of periodic points of the form $\frac{1}{q} \in \SI,$ where $q$ is an odd integer.
We also analyse the efficiency of these algorithms and present a first application
to find composite Mersenne numbers for huge values of $n$.
Finally, in Section~\ref{SEC::Prime_Mersenne_numbers}, we present a primality test for Mersenne numbers based on
Theorem~\ref{MainTheorem::A}, and the algorithms introduced in Section~\ref{SEC::Algos}.

\section{The doubling map and its periodic points}\label{SEC::DoublingMap}

A \emph{discrete dynamical system} $(f,M)$
can be defined as the action of a continuous function $\map{f}{M}$
on a topological space $M$.
We consider the sequence $\bigl\{f^k\bigr\}_{k \in \N}$ of iterates of $f$,
where $f^k$ denotes the composition of $f$ with itself $k$ times.
We say that a point $z \in M$ is \emph{periodic of (minimal) period $n \geq 1$} if $f^n(z) = z$
and $f^\ell(z) \neq z$ for every $\ell = 1,2,\dots, n-1.$ Similarly,  we say that $x \in M$ is
\emph{pre-periodic} if it is not periodic but
$f^{k}(x)$ is periodic for some $k\geq 1$.

\begin{remark}\label{Remark:periodic}
The set of all solutions of $f^m(z) = z, \ z\in M$ determines the
\emph{set of fixed points of $f^m$}, $m\in \mathbb N$.
Not every fixed point of $f^m$ is a periodic point of period $m$ (defined above).
In what follows, we assume that the period is always \emph{minimal}
(without explicit mention), and so it is uniquely defined.
For brevity, if $z\in M$ is periodic for $f$ of period $n$
we will say that $z$ is \emph{$f-$periodic of period $n$},
or that $n$ is the $\delta-$period of the (periodic point) $z$.

In this framework it is worth noticing that every
$f-$periodic point of period $n$ is a fixed point of
$f^{kn}$ for every $k \in \N.$
\end{remark}

The sequence of all iterates for a given $z \in M$ is called the \emph{$f-$orbit of $z$} and is denoted by
\[
\operatorname{Orb}_f(z) := \bigl\{z,f(z),\dots,f^k(z),\ldots\bigr\}.
\]

A particular case of a discrete dynamical system is the pair $(\delta,\SI)$,
where $\delta$ is the \emph{doubling map} and $\SI$ is the unit circle.
More precisely, we define the unit circle $\SI$ as $\R / \Z \equiv [0,1)$,
and the doubling map $\map{\delta}{\SI}$ is defined by $\delta(\theta) := 2\theta \pmod{1}$.
Hence, for every $\theta \in \SI$, the $\delta-$orbit of $\theta$ is
\[
    \operatorname{Orb}_{\delta}(\theta) = \elasticset{2^k \theta \pmod{1}}{k = 0,1,2,\ldots}.
\]
Of particular interest are the periodic points of period $n\geq 1$ for $\delta$. Those points must be solutions of the equation
\[
    (2^n-1) \theta \equiv 0 \,  \pmod{1}.
\]
determining all fixed points of $\delta^n$ (see Remark \ref{Remark:periodic}).  Equivalently,
\begin{equation}\label{eq:per_points}
     \biggl\{\frac{1}{2^n-1}, \frac{2}{2^n-1},\dots, \frac{2^n - 2}{2^n-1}\biggr\}
\end{equation}
In particular, the set of periodic points of the doubling map is a dense subset of $\SI$, but there are no periodic points of a given period $n>1$ in the arc $\Bigl[0,\tfrac{1}{2^n-1}\Bigr)$.

\begin{remark}
The unique fixed point of the doubling map is $0$, whereas
$\tfrac{1}{3}$ and $\tfrac{2}{3}$ are the only periodic points of period $2$.
In fact,
$\delta\bigl(\tfrac{1}{3}\bigr) = \tfrac{2}{3}$ and $\delta\bigl(\tfrac{2}{3}\bigr) = \tfrac{1}{3}$
so that $\Bigl\{\tfrac{1}{3}, \tfrac{2}{3}\Bigr\}$ is the periodic orbit of period 2.
\end{remark}

\begin{remark}\label{REM::NoEsOroTodoLoQueReluce}
A rational angle $\frac{1}{q}$ with $q \geq 3$ odd is not necessarily of the form $\frac{1}{2^n -1}$ in
\eqref{eq:per_points}.
Indeed, not every odd $q$ is of the form $2^n -1$ for some $n$. As an example it is easy to check that this is the case for $q = 5$, and
that the rational angle $\tfrac{1}{5}$ has period 4 because
\begin{align*}
    \delta\bigl(\tfrac{1}{5}\bigr) &= \tfrac{2}{5}, &
    \delta\bigl(\tfrac{2}{5}\bigr) &= \tfrac{4}{5}, \\
    \delta\bigl(\tfrac{4}{5}\bigr) &= \tfrac{8}{5}\text{\footnotesize$\pmod{1}$} = \tfrac{3}{5}, &
    \delta\bigl(\tfrac{3}{5}\bigr) &= \tfrac{6}{5}\text{\footnotesize$\pmod{1}$} = \tfrac{1}{5}.\\
\end{align*}
Therefore, in \eqref{eq:per_points},
\[
   \frac{1}{5} = \frac{3}{2^4 -1} \neq \frac{1}{2^4 -1}.
\]

On the other hand, from Theorem~\ref{MainTheorem::A}(a),
$5$ is a divisor of $M(4) = 2^4 - 1 = 5 \times 3$.
\end{remark}

The key tool for proving Theorem~\ref{MainTheorem::A} is
to connect the dynamics of the rational angles in $\SI$ under the doubling map $\delta$
to Mersenne numbers.
Notice that \eqref{eq:per_points} provides a natural link
through which this connection occurs. The following technical lemma, which is a particular case of Theorem~5.1 in \cite{CarlesonGamelinBook}, will be used  in the  proof of Theorem~\ref{MainTheorem::A}.

If $a,b \in \N$ we denote by $\gcd(a,b)$ the \emph{greatest common divisor} of $a$ and $b$. Notice that if either $a$ or $b$ are prime numbers then $\gcd(a,b) = 1$.

\begin{lemma}\label{LEM::CarlesonGamelin}
Let $q \in \N$ and let $p \in \{0,1,\dots,q-1\}$ be with $\gcd(p,q) = 1$.
Then, the following statements hold.
\begin{enumerate}[(a)]
\item The point $\tfrac{p}{q} \in \SI$ is $\delta-$pre-periodic if and only if the denominator $q$ is even.
\item The point $\tfrac{p}{q} \in \SI$ is $\delta-$periodic if and only if the denominator $q$ is odd.
\end{enumerate}
\end{lemma}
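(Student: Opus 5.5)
The plan is to work directly with the explicit formula for the iterates of a rational angle:
\[
\delta^k\Bigl(\tfrac{p}{q}\Bigr) = \frac{2^k p \pmod{q}}{q}, \qquad k \ge 0,
\]
which follows by induction from the definition $\delta(\theta)=2\theta \pmod{1}$. The degenerate case $p=0$ forces $q=\gcd(0,q)=1$, and $\tfrac{0}{1}=0$ is the fixed point of $\delta$, so both statements hold there; from now on $q\ge 2$.

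The first step is to observe that every such point is either periodic or pre-periodic. Its whole $\delta$-orbit lies in the finite set $\{0,\tfrac1q,\dots,\tfrac{q-1}{q}\}$, so by the pigeonhole principle there exist $i<j$ with $\delta^i(\tfrac pq)=\delta^j(\tfrac pq)$. Hence $\delta^i(\tfrac pq)$ is periodic, and $\tfrac pq$ is either periodic or pre-periodic (and never both, by definition). Consequently (a) and (b) are equivalent, and it suffices to prove (b).

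For the direction ``$q$ odd $\Rightarrow$ periodic'', note that $\gcd(2,q)=1$, so by Euler's theorem $2^{\varphi(q)}\equiv 1 \pmod q$. Substituting into the formula gives $\delta^{\varphi(q)}(\tfrac pq)=\tfrac pq$, so $\tfrac pq$ is a fixed point of $\delta^{\varphi(q)}$ and hence $\delta$-periodic (with minimal period dividing $\varphi(q)$).

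For the converse, suppose $q$ is even and $\tfrac pq$ is periodic of period $m\ge1$. Then
\[
\tfrac pq=\delta^m\bigl(\tfrac pq\bigr)=\tfrac{r}{q}, \qquad r = 2^m p \pmod q .
\]
Since both $q$ and $2^m p$ are even, the residue $r$ is even. Thus $p=r$ is even, contradicting $\gcd(p,q)=1$ with $q$ even. Hence $q$ even implies that $\tfrac pq$ is not periodic, so by the first step it is pre-periodic.

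No step seems to be a real obstacle. The only point needing care is the bookkeeping that periodic and pre-periodic are mutually exclusive and exhaustive here, which is settled by the finiteness argument.
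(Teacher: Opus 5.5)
Your proof is correct and complete. The paper gives no argument of its own for this lemma: it only states that it is a particular case of Theorem~5.1 in \cite{CarlesonGamelinBook}. Your proposal is therefore a genuinely different, self-contained route, not a reconstruction of the paper's proof.

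You organise it in three steps:
\begin{enumerate}[(i)]
\item The pigeonhole argument on the finite forward-invariant set $\{0,\tfrac{1}{q},\dots,\tfrac{q-1}{q}\}$ shows that, with the paper's definition of pre-periodic, each such point is exactly one of periodic or pre-periodic. Hence (a) follows from (b).
\item Euler's theorem gives $\delta^{\varphi(q)}(\tfrac{p}{q})=\tfrac{p}{q}$ for odd $q$.
\item The parity of the residue of $2^mp$ modulo an even $q$ rules out periodicity.
\end{enumerate}

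The citation is shorter and places the statement inside a more general result, but it hides an entirely elementary mechanism. Your version uses exactly the tool the paper relies on later, namely $\delta^k(\tfrac{p}{q})=\tfrac{r_k}{q}$ with $r_k\equiv 2^kp \pmod{q}$. This is the division $2^kp=q\alpha_k+r_k$ in the proof of Lemma~\ref{lemma:period}, and the congruence $2^n\equiv 1 \pmod{q}$ in the proof of Theorem~\ref{MainTheorem::A}. The same identity shows that, for odd $q$ and $\gcd(p,q)=1$, the period of $\tfrac{p}{q}$ is the multiplicative order of $2$ modulo $q$. That is precisely the quantity the algorithms of Section~\ref{SEC::Algos} compute.

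Your argument also shows that coprimality is needed only in the even case: for instance $\tfrac{2}{6}=\tfrac{1}{3}$ is periodic.
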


\section{Proof of Theorem~\ref{MainTheorem::A}}\label{section:theoremAB}

In this section we present the proof of Theorem~\ref{MainTheorem::A},
and discuss several theoretical implications.

\begin{proof}[Proof of Theorem~\ref{MainTheorem::A}]
We start by proving the part (a).
Let $q \in \N$ odd and assume that $\frac{1}{q}\in \mathbb S^1$ is $\delta-$periodic  of period $n$ (in particular it is a fixed point of $\delta^n$). Then we have  $2^n  \equiv 1 \, \pmod{q}$ or, equivalently,
\begin{equation}\label{eq:alpha}
2^n -1 = q \alpha
\qquad\text{for some $\alpha \in \N$.}
\end{equation}
Therefore, $q$ divides $M(n) = 2^n-1$.

On the other hand, if $q \in \N$ is an odd divisor of $M(n) = 2^n-1$ for some $n$,
then \eqref{eq:alpha} holds again and, equivalently,
\[
\frac{1}{q} = \frac{\alpha}{2^n-1}.
\]
So, from \eqref{eq:per_points} (see also Remark~\ref{REM::NoEsOroTodoLoQueReluce}),
$\frac{1}{q}$ is a fixed point of $\delta^n$. This ends the proof of Statement~(a).

To prove Statement~(b) we just notice that the primality of $M(n)$ is equivalent to
the non-existence of odd (prime) divisors of $M(n)$.
Moreover, by Lemma~\ref{LEM::CarlesonGamelin},
$\frac{1}{q}$ is $\delta-$periodic for every $q \in \biggl\{3,5,7,\dots,\elasticfloor{\sqrt{M(n)}}\biggr\}$ odd. Then Statement~(b) follows from Statement~(a).
\end{proof}

We know that if $n$ is composed then $M(n)$ is also composed. Next result complements this statement for the case when $n$ is even using the doubling map.

\begin{corollary}
Assume that $\theta=\frac{1}{q}$ is $\delta-$periodic of period $n$.
Then $q$ divides $M(n\ell)$ for all $\ell>0$.
In particular, 3 divides $M(n)$ for all $n$ even.
\end{corollary}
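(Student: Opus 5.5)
The plan is to combine Remark~\ref{Remark:periodic} with Theorem~\ref{MainTheorem::A}(a). Since $\theta=\frac1q$ is $\delta-$periodic of period $n$, Remark~\ref{Remark:periodic} says that $\frac1q$ is a fixed point of $\delta^{n\ell}$ for every $\ell\in\N$. Concretely, $\delta^{n\ell}(\theta)=\delta^{n}\circ\cdots\circ\delta^{n}(\theta)=\theta$, applying $\delta^n$ a total of $\ell$ times.

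Next I check the hypotheses needed for part~(a) with exponent $n\ell$.

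First, $q$ is odd. The point $\frac1q$ is periodic and $\gcd(1,q)=1$, so Lemma~\ref{LEM::CarlesonGamelin}(b) applies. Alternatively, one can argue directly: if $\frac1q$ is fixed by $\delta^{n}$, then $(2^n-1)/q\in\Z$, and an even $q$ cannot divide an odd number.

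Second, $q\ge 3$. Indeed $q=1$ gives $\theta=0$, the fixed point, which has period $1$. The degenerate case $n=1$ is excluded implicitly, since there $q=1$ and the divisibility is trivial anyway.

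Third, part~(a) requires $q\le M(n\ell)-2$, which can fail only when $q=M(n\ell)$. In that case $q\mid M(n\ell)$ holds trivially. More simply, I would bypass the range restriction altogether. The computation in the proof of part~(a) shows that $\delta^{m}(\frac1q)=\frac1q$ means $\frac{2^m}{q}-\frac1q\in\Z$, that is, $q\mid 2^m-1$, and this holds for any $q$ and $m$. Applying it with $m=n\ell$ gives $q\mid M(n\ell)$. I would also note the purely arithmetic confirmation that $2^n-1$ divides $2^{n\ell}-1$ by the factorization in the introduction.

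For the particular case, take $q=3$. By the remark on period $2$, $\frac13$ is $\delta-$periodic of period $2$. Any even $n$ can be written $n=2\ell$, so $3\mid M(n)$ follows.

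I expect no real obstacle here. The only care needed is handling the edge conditions of Theorem~\ref{MainTheorem::A}(a) (the range of $q$ and the oddness of $q$), which is why I prefer invoking the direct congruence $2^{n\ell}\equiv1\pmod q$.
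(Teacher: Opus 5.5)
Your proof is correct and matches the paper's: $\frac{1}{q}$ is a fixed point of $\delta^{n\ell}$, which gives $q \mid M(n\ell)$, and taking $q=3$ (period $2$) gives the particular case. Reading off $q \mid 2^{n\ell}-1$ directly from $\delta^{n\ell}(\frac{1}{q})=\frac{1}{q}$ is a sensible refinement, because it also covers the edge case $q=M(n)$, $\ell=1$; that case lies outside the range $\{3,\dots,M(n)-2\}$ of Theorem~\ref{MainTheorem::A}(a), which the paper's proof invokes without comment.
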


\begin{proof}
If $\frac{1}{q}\in \SI$ is $\delta-$periodic of period $n$,
then clearly $\delta^{n\ell}\bigl(\frac{1}{q}\bigr)=\frac{1}{q}$
and so $\frac{1}{q}$ is a fixed point of $\delta^{n\ell}$, for all $\ell\geq 1$.
Hence, Theorem~\ref{MainTheorem::A}(a) implies that $q\mid M(n\ell)$ for all $\ell>0$.
Moreover, since
\[
 \frac{1}{3} \longmapsto \frac{2}{3} \longmapsto \frac{4}{3} \equiv \frac{1}{3} \pmod{1}.
\]
we conclude that $\frac{1}{3}$ has minimal period 2 and so $3\mid M(2\ell)$ for all $\ell>0$.
  \end{proof}

We conclude this section by interpreting the main conjectures concerning Mersenne numbers through the dynamics of the doubling map. As discussed in the introduction, one of the central open problems in the theory of Mersenne numbers is whether the set of prime Mersenne numbers is finite or infinite. Recasting this question in dynamical terms provides a different perspective on the problem and suggests new avenues for its investigation.

\begin{conj}\label{conj:prime_infinite}
The sequence of prime Mersenne numbers is infinite.
\end{conj}

In this context there are many conjectures relating  the primality of $M(n)$ with the \emph{form} of the prime number $n$.  In the early works of Mersenne it was implicit that he considered the following statement as valid (see \cite{BatSelWag}): {\it
$M(n)$ is prime if and only if $n$ is of the form $2^k\pm 1$ or $2^{2k}\pm 3$}. Going forward on the eventual characterization of Mersenne primer numbers the authors in \cite{BatSelWag} propose the following conjecture.

\begin{conj}
If two of the following statements about an odd positive integer $n$ are true, then the third one is also true.
\begin{itemize}
\item[(a)] $n=2^k\pm 1$ or $n=4^k\pm 1$ (for some $k>1$)
\item[(b)] $M(n)$ is prime
\item[(c)] $(2^n+1)/3$ is prime
\end{itemize}
\end{conj}

To the best of our knowledge, all known results or conjectures concerning the primality of $M(n)$ and the finiteness of the sequence of Mersenne primes are number-theoretic in nature. Theorem~\ref{MainTheorem::A} allows to write Conjecture  \ref{conj:prime_infinite} in terms of the dynamics of the period doubling.

\begin{conj}
For all $n_0>0$ there exists a primer number $n>n_0$ such that for all $q$ prime with $1<q\leq \elasticfloor{\sqrt{M(n)}}$ the angles $\frac{1}{q}$ have period $k_q\ne n$ under the doubling map $\delta$.
\end{conj}

\section{Efficient algorithms to compute periods of the doubling map}\label{SEC::Algos}

 In this section, we develop the machinery needed to apply Theorem~\ref{MainTheorem::A}(a) to the detection of composite Mersenne numbers. To this end, we assume throughout  that $q\in \mathbb N$ is odd, and we construct an efficient algorithm for computing the $\delta$-period of $\frac{1}{q} \in \SI$. This algorithm provides a practical means of relating the dynamics of $\delta$ to the arithmetic properties of Mersenne numbers.

 By Lemma~\ref{LEM::CarlesonGamelin}, $\frac{1}{q}\in \SI$  is $\delta$-periodic. Moreover, Theorem~\ref{MainTheorem::A}(a) shows that if $\frac{1}{q} \in \SI$ is $\delta-$periodic of period $n$ (and so, it is a fixed point of $\delta^n$) then $q \mid M(n)$. Thus, determining the $\delta$-period of $\frac{1}{q}$ provides direct arithmetic information about the divisibility properties of the Mersenne number $M(n)$, and in particular offers a method for detecting its compositeness.

In the first subsection we elaborate the algorithms for computing the $\delta-$periods and discuss their efficiency.
In the second subsection we present and discuss the output of these algorithms
by showing a large set of natural numbers $q$ for which $\frac{1}{q}\in \SI$ is $\delta-$periodic of period $n_q$.
The interest of this output is that many of the $n_q$-periods would be huge and prime,
so the Mersenne numbers $M(n_q)$ would be composite,
corresponding to $n_q$ prime and further beyond the 52th Mersenne number.

\subsection{The algorithms}
Since $\delta-$periodic points of period $n$ are given by  \eqref{eq:per_points} we conclude that if
$\frac{1}{q}\in \SI$ is $\delta-$periodic of period $n$ then $2^n  \equiv 1 \pmod{q}$.
In this spirit we define the \emph{$q-$integer doubling map} $\Delta_q$ as follows:
\begin{definition} \label{DEF::IntegerDoubling}
Let $q\in \N$ (odd) with $q \geq 3$.
Set $\ZZ_q := \{1,2,\dots,q-1\}$ and define
$\map{\Delta_q}{\ZZ_q}$ by $\Delta_q(r) := 2r\pmod{q}.$
\end{definition}

\begin{remark}
Observe that, since $q$ is odd, we have $0<\Delta_q(r)\leq q-1$ for every $r \in \ZZ_q$ and,
hence, $\Delta_q$ is well defined.
\end{remark}

By using the \emph{$q-$integer doubling map} $\Delta_q,$
we can re-formulate the $\delta-$periodicity of $\frac{1}{q}$.
\begin{lemma}\label{LEM::Basic}
Let $q\in \N$ be odd with $q \geq 3$.
Then, $\frac{1}{q}\in \SI$ has $\delta-$period $n$ if and only if
$1 \in \ZZ_q$ has $\Delta_q-$period $n$.
\end{lemma}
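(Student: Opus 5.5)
The plan is to build an explicit conjugacy between the $\delta$-orbit of $\frac{1}{q}$ and the $\Delta_q$-orbit of $1$. Equal orbits step by step give equal minimal periods.

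First I set up the map. Consider the finite set $\bigl\{\frac{r}{q} : r \in \ZZ_q\bigr\} \subset \SI$ and the bijection $h\colon \ZZ_q \to \bigl\{\frac{r}{q} : r\in\ZZ_q\bigr\}$ given by $h(r) = \frac{r}{q}$. It is a bijection because distinct $r \in \{1,\dots,q-1\}$ give distinct points of $[0,1)$.

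Next I check the conjugacy relation $\delta \circ h = h \circ \Delta_q$ on $\ZZ_q$. For $r \in \ZZ_q$ we have $\delta\bigl(\frac{r}{q}\bigr) = \frac{2r}{q} \pmod 1$. If $2r < q$ this equals $\frac{2r}{q}$. Otherwise $q < 2r < 2q$, where equality is impossible since $q$ is odd, and it equals $\frac{2r-q}{q}$. In both cases the result is $\frac{\Delta_q(r)}{q}$. The remark following Definition~\ref{DEF::IntegerDoubling} guarantees $\Delta_q(r) \in \ZZ_q$, so the image set is invariant under $\delta$.

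By induction on $k$ I then get
\[
\delta^k\bigl(\tfrac{1}{q}\bigr) = \tfrac{\Delta_q^k(1)}{q} \quad \text{for all } k \ge 0 .
\]
Since $h$ is injective, $\delta^k\bigl(\frac1q\bigr) = \frac1q$ holds if and only if $\Delta_q^k(1) = 1$. The set of $k\ge1$ returning to the start is therefore the same for both maps, and so the minimal elements coincide. This proves the equivalence of periods. Lemma~\ref{LEM::CarlesonGamelin}(b), or finiteness of $\ZZ_q$ together with injectivity of $\Delta_q$ for odd $q$, ensures that both sets are nonempty, although the biconditional does not strictly need this.

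The only real point needing care is the case analysis showing that reduction mod $1$ matches reduction mod $q$ and never yields $0$. Oddness of $q$ handles that, so I expect no serious obstacle.
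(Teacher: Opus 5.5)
Your proof is correct. The paper states this lemma without proof and treats it as an immediate reformulation of the observation made just before Definition~\ref{DEF::IntegerDoubling}: $\delta^k(\frac{1}{q})=\frac{1}{q}$ exactly when $2^k\equiv 1 \pmod{q}$, which is exactly when $\Delta_q^k(1)=1$. Your conjugacy $h(r)=\frac{r}{q}$ makes that rigorous and also gives the stronger orbit identity $\delta^k(\frac{1}{q})=\frac{\Delta_q^k(1)}{q}$, which Proposition~\ref{PROP::Flying} implicitly relies on later.
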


Note that iterating the integer doubling map $\Delta_q$ consists of multiplying by two
$$
r \mapsto 2r=r+r
$$
and modulo $q$ the result.
From the point of view of efficiency, multiplying by two
is arithmetically negligible but the modulo operation is something to be avoided.
Certainly, in all cases for which $2r < q$ the $\pmod{q}$ operation is not necessary.
To \emph{predict} when it will be necessary to compute modulo $q$
(after the operation $r \mapsto 2r=r+r$)
when iterating $\Delta_q$ we introduce the \emph{$q-$Poincar\'e integer doubling map}.

\begin{definition} \label{DEF::qPoincare}
The \emph{$q-$Poincar\'e integer doubling map} $\map{\pi_q}{\ZZ_q}$ is defined by
\begin{equation}\label{EQ::flying}
\pi_q(r) := 2^{p} \cdot r - q,
\end{equation}
\noindent where $p \in \Z^+$ is the smallest positive integer such that $2^{p} \cdot r \geq q.$
The number $p$ is called the \emph{flying time of $\pi_q(r)$}.
Observe that the flying time $p$ always exists and it is positive because $r \in \ZZ_q.$
\end{definition}

\begin{remark}\label{REM::FlyingTimeBounds}
Let $\ell >0$. The minimality condition on the number $p$ in the above definition implies that
\begin{enumerate}[(i)]
\item if $2^\ell \cdot r \geq q$ then the \emph{flying time of $\pi_q(r)$} is smaller than or equals to $\ell$;
\item if $2^{\ell} \cdot r < q$ then the \emph{flying time of $\pi_q(r)$} is at least $\ell + 1$, and
\item if $2^\ell > q$ then the \emph{flying time of $\pi_q(r)$} is smaller than or equal to $\ell$, for all $r\in\ZZ_q$.
\end{enumerate}
\end{remark}

The following proposition is an immediate consequence of
Lemma~\ref{LEM::CarlesonGamelin}, Lemma~\ref{LEM::Basic}, and the preceding definitions.
It is the key tool to compute periods with the help of the
$q-$Poincar\'e integer doubling map $\pi_q$.

\begin{prop}\label{PROP::Flying}
The following statements hold for every $q\geq 3$ odd.
\begin{enumerate}[(a)]
\item $\frac{1}{q}\in \SI$ is $\delta-$periodic.
\item $1 \in \ZZ_q$ is $\Delta_q-$periodic.
\item The $\delta-$period of $\frac{1}{q}\in \SI$ and
      the $\Delta_q-$period of $1 \in \ZZ_q$
      coincide.
\item $1 \in \ZZ_q$ is $\pi_q-$periodic.
\item The $\Delta_q-$period of $1 \in \ZZ_q$
      is the sum of the flying times of the set
\[
   \Bigl\{\pi_q(1), \ \pi_q\bigl(\pi_q(1)\bigr), \ \pi_q\bigl(\pi^2_q(1)\bigr),\dots,\pi_q\bigl(\pi^{m-1}_q(1)\bigr)\Bigr\},
\]
where $m\in\N$ denotes the $\pi_q$-period of $1\in \ZZ_q$.
\end{enumerate}
\end{prop}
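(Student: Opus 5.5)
Parts (a)--(c) reduce to earlier results; (d) and (e) need the relation between the Poincaré map $\pi_q$ and $\Delta_q$.

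\textbf{Parts (a)--(c).} Since $q$ is odd, $\gcd(1,q)=1$, so Lemma~\ref{LEM::CarlesonGamelin}(b) applied with $p=1$ gives (a) directly. For (b) and (c) I will use the conjugacy behind Lemma~\ref{LEM::Basic}. The map $r\mapsto r/q$ identifies $\ZZ_q$ with the points $\{1/q,\dots,(q-1)/q\}\subset\SI$. It satisfies $\delta(r/q)=\Delta_q(r)/q$, because $2r \pmod q$ divided by $q$ equals $2r/q \pmod 1$. Hence $\delta^k(1/q)=\Delta_q^k(1)/q$ for all $k$. Therefore $\delta^k(1/q)=1/q$ exactly when $\Delta_q^k(1)=1$, so the two periodicities and the two minimal periods coincide. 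Combining this with (a), or simply citing Lemma~\ref{LEM::Basic}, gives (b) and (c).

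\textbf{Key identity for (d) and (e).} If $\pi_q(r)$ has flying time $p$, then
\[
\Delta_q^{j}(r)=2^j r \quad\text{for } j<p, \qquad \Delta_q^{p}(r)=\pi_q(r).
\]
For $j<p$ the value $2^j r$ is less than $q$ by minimality of $p$, so no reduction occurs. At step $p$, $2^{p-1}r<q$ gives $q\le 2^p r<2q$, so the reduction subtracts exactly one $q$. This identity is where the real content lies.

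\textbf{Part (d).} By induction, $\pi_q^i(1)=\Delta_q^{s_i}(1)$, where $s_i=p_1+\dots+p_i$ is the partial sum of the flying times $p_j$ of $\pi_q(\pi_q^{j-1}(1))$. The $s_i$ strictly increase and $\Delta_q$-orbits are finite, so some $\pi_q^i(1)$ with $i\ge 1$ must equal $1$. To prove this, I will show that the $\pi_q$-orbit of $1$ is a subset of the $\Delta_q$-orbit of $1$, which is a cycle by (b). Let $n$ be the $\Delta_q$-period of $1$. The time $n$ is a reduction time, since $\Delta_q^{n-1}(1)\ne 1$ doubles to something congruent to $1$, so reduction occurs. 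The points of the $\Delta_q$-cycle that $\pi_q$ lands on are exactly those at reduction times. Hence $1=\Delta_q^n(1)$ equals $\pi_q^m(1)$, where $m$ is the number of reductions in times $1,\dots,n$.

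\textbf{Part (e).} By construction, $s_m=n$ in the argument above. It remains to check that $m$ is the minimal $\pi_q$-period. If $\pi_q^{i}(1)=1$ with $i<m$, then $\Delta_q^{s_i}(1)=1$ with $0<s_i<n$, contradicting the minimality of $n$. So the $\Delta_q$-period $n$ equals the sum of the $m$ flying times listed in the statement. The main obstacle is the bookkeeping that identifies reduction times with $\pi_q$-steps, including checking that at most one $q$ is ever subtracted; the remaining steps follow from it directly.
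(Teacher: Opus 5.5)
Your proposal is correct and matches the paper, which states without further argument that the proposition is an immediate consequence of Lemma~\ref{LEM::CarlesonGamelin}, Lemma~\ref{LEM::Basic} and the definitions; you supply the details the paper omits, namely that no reduction occurs before the flying time $p$ and $\Delta_q^{p}(r)=\pi_q(r)$, and that the $\Delta_q$-return time $n$ is a reduction time. One wording fix: $n$ is a reduction time because $2\Delta_q^{n-1}(1)$ is even and so cannot equal $1$, not because $\Delta_q^{n-1}(1)\neq 1$.
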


Algorithm~\ref{ALGO::Naive} below arises from the last statement of Proposition~\ref{PROP::Flying}
and is a direct, naive method to compute periods of points
$\frac{1}{q}\in \SI$ under the circle doubling map $\delta$.
Observe that we already know that the period of $\frac{1}{3}\in \SI$ is $2$,
and therefore we are not interested in algorithms to compute this period.

\begin{algorithm}[ht]
\caption{\hspace*{1em}\parbox{32em}{%
Computation of the period of $\tfrac{1}{q}$ by the direct, naive use of Proposition~\ref{PROP::Flying}}}\label{ALGO::Naive}
\algdef{SE}[DOWHILE]{Do}{EndDo}{\algorithmicdo}[1]{\algorithmicwhile\ #1}%
\begin{algorithmic}
\Ensure $q \geq 5$ odd{\color{blue}\Comment{$q \geq 5$ implies $qh := \tfrac{q-1}{2} > 1$}}
\Procedure{\_\_period\_of\_one\_divided\_by}{$q$}
\State $qh \gets \tfrac{q-1}{2}${\color{blue}\Comment{\footnotesize{}Initialising global auxiliary variable}}
\State $r \gets 1${\color{blue}\Comment{\footnotesize{}Initialising iteration}}
\State $\text{\sffamily{}period} \gets 0$
    \Do
        \State $ft \gets 0${\color{blue}\Comment{\footnotesize{}Initialising iteration of $\pi_q$}}
        \While{$r \leq qh$}
               \State $r \gets r + r$
               \State $ft \gets ft + 1$
        \EndWhile
        \State $r \gets r - (q - r)${\color{blue}\Comment{\footnotesize{}Computing $2r\pmod{q}$ safely (to avoid overflows) and efficiently}}
        \State $\text{\sffamily{}period} \gets \text{\sffamily{}period} + ft + 1$
    \EndDo{$r \neq 1$}
    \State \Return \text{\sffamily{}period}
\EndProcedure
\end{algorithmic}
\end{algorithm}

Remember that if  $a \in \R^+$ then $\floor{a}$ denotes the largest
non-negative integer which is smaller than or equals to $a$.
Moreover,  $\log_2$  denotes the logarithm in base $2$.

A serious drawback of Algorithm~\ref{ALGO::Naive} is that the flying times are,
in fact, computed by trial and error.
To overcome this problem we will explore a \emph{predictive strategy}.
More precisely, recall that for every $r \in \ZZ_q$,
the flying time of $\pi_q(r)$ is the smallest positive integer $p$ such that $2^{p} \cdot r \geq q.$
Since $2^{p} \cdot r$ is even and $q$ is odd, this inequality is equivalent to
$2^{p} \cdot r > q-1$ and, since $p > 0$, the minimality of $p$ gives
$2^{p} \cdot r > q-1 \geq 2^{p-1} \cdot r.$
Since $2^{p-1} \in \N,$ this is equivalent to
\[
   2^{p} > \frac{q-1}{r} \geq \elasticfloor{\frac{q-1}{r}} \geq 2^{p-1},
\]
which in turn is equivalent to
\[
   p > \log_2\biggl(\elasticfloor{\frac{q-1}{r}}\biggr) \geq p-1.
\]
Consequently,
\begin{equation*} 
   p-1 = \elasticfloor{\log_2\biggl(\elasticfloor{\frac{q-1}{r}}\biggr)}.
\end{equation*}
This gives the {\it predictive} Algorithm~\ref{ALG::PredictivePoincare} to compute one iterate of the $q-$Poincar\'e integer doubling map $\pi_q$.
In this algorithm we denote $p-1$ by $t$ and, equivalently, $p := t+1.$
Concerning the computation of $\pi_q(r) := 2^{p} \cdot r - q,$
recall that $2^{t+1} \cdot r \geq q+1 > q-1 \geq 2^{t} \cdot r.$

\begin{algorithm}
\caption{Iterating the $q-$Poincar\'e integer doubling map with a predictive algorithm}\label{ALG::PredictivePoincare}
\begin{algorithmic}
\Procedure{predictive\_Poincare\_doubling\_map}{$r$, $q$}
  \State $t \gets \elasticfloor{\log_2\Bigl(\elasticfloor{\frac{q-1}{r}}\Bigr)}$
\vspace*{4pt}
  \State $a \gets r\cdot 2^{t}$
  \State \textbf{return} $\bigl[a - (q-a), t + 1\bigr]$
\EndProcedure
\end{algorithmic}
\end{algorithm}

Notice that the computational cost of the predictive algorithm of the
$q-$Poincar\'e integer doubling map is independent on $r$.
It strongly depends on the implementation (and the computational cost)
of evaluating $\floor{\log_2(\cdot)}$.
In contrast, the computational cost of the non predictive evaluation
of an iterate of the $q-$Poincaré integer doubling map depends
on the flying time that, in turn, depends on $r$.

For small flying times, the non-predictive algorithm must perform better;
it replaces the costly computation of $\floor{\log_2(\cdot)}$ with
a small number of additions and comparisons.
In contrast, for large flying times, the predictive algorithm avoids computing
a large number of additions and, specially, \emph{comparisons}
by performing only a single $\floor{\log_2(\cdot)}$ evaluation.

The efficient algorithm to compute iterates of the
$q-$Poincar\'e integer doubling map $\pi_q(r)$
relies on deciding a priori whether $r$ is small enough to deserve
a predictive evaluation of the flying time by using $\floor{\log_2(\cdot)}$,
or rather $r$ is too large for that and we have to use the non-predictive algorithm.
However, observe that if $r$ is large (whatever it means) it is contradictory
to estimate a priori the flying time without knowing the benefits of doing it.

Next we will algorithmically try to overcome the problem of finding a priori bounds
for the flying times.

\begin{definition} \label{DEF::kappa}
We define the \emph{critical efficiency boundary for the flying times},
denoted by $\kappa$,
to be the positive integer such that for flying times smaller than or equals to $\kappa$
the non predictive algorithm to compute one iterate of the
$q-$Poincar\'e integer doubling map $\pi_q(r)$ is the winner
(meaning that is more efficient that the predictive one).
\end{definition}

\begin{remark}
From Remark~\ref{REM::FlyingTimeBounds} we get
$
   \kappa \in \{1,2,\dots,63,64\},
$
when the integers $q$ are unsigned integers of 64 bits.
Of course, the precise value of $\kappa$ depends on the specific implementation of both algorithms,
and in particular on the implementation of $\floor{\log_2(\cdot)}$.
We will discuss later the choice of $\kappa$ used in our computations.
\end{remark}

The next lemma gives an easy algorithm (in terms of the
\emph{critical efficiency boundary for the flying times} $\kappa$)
to a priori choose between the predictive
and the non-predictive algorithms when computing $\pi_q(r)$.
This is possible because we are using $\kappa$ as the discriminator between
\emph{small} and \emph{large} flying times.
In this lemma we have to require $q \geq 5$
(equivalently $\tfrac{q-1}{2} > 1$)
because this was a working requirement for
Algorithm~\ref{ALGO::Naive}.

\begin{lemma}\label{LEM::qkappabounds}
Let $\kappa \in \{1,2,\dots,64\}$, and let
$q \geq \max\bigl\{5, 2^{\kappa}\bigr\}$ odd.
Set $\sigma(q) := \floor{\log_2(q)}$. Then, the following statements hold.
\begin{enumerate}[(a)]
\item $2^{\sigma(q)} < q < 2^{\sigma(q)+1}$.
\item The flying time of $\pi_q(1)$ is exactly $\sigma(q) + 1$.
\item The predictive algorithm to compute $\pi_q(r)$ is more efficient than
      the non-predictive one if and only if $r \in \bigl\{1,2,\dots,\elasticfloor{\frac{q}{2^{\kappa}}}\bigr\}.$
\end{enumerate}
\end{lemma}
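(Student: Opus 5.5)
Parts (a) and (b) are elementary, and part (c) reduces to the definition of $\kappa$ together with Remark~\ref{REM::FlyingTimeBounds}.

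For (a), by definition of $\sigma(q)=\floor{\log_2(q)}$ we have $2^{\sigma(q)}\le q<2^{\sigma(q)+1}$. Equality $q=2^{\sigma(q)}$ is impossible: $q\ge 5$ is odd, while $2^{\sigma(q)}\ge 4$ is even. So the inequalities are strict.

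For (b), the flying time of $\pi_q(1)$ is the smallest $p>0$ with $2^p\ge q$. By (a), $2^{\sigma(q)}<q$, so by Remark~\ref{REM::FlyingTimeBounds}(ii) the flying time is at least $\sigma(q)+1$. Also $2^{\sigma(q)+1}>q$, so by Remark~\ref{REM::FlyingTimeBounds}(i) it is at most $\sigma(q)+1$. Hence it equals $\sigma(q)+1$.

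For (c), Definition~\ref{DEF::kappa} says the predictive algorithm wins exactly when the flying time $p$ of $\pi_q(r)$ exceeds $\kappa$. So it suffices to show that $p\ge \kappa+1$ if and only if $r\le \floor{q/2^\kappa}$. By the minimality of $p$, we have $p\ge\kappa+1$ iff $2^\kappa r<q$; this is Remark~\ref{REM::FlyingTimeBounds}(i)--(ii) with $\ell=\kappa$.

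It remains to convert $2^\kappa r<q$ into $r\le\floor{q/2^\kappa}$.
\begin{itemize}
\item If $2^\kappa r<q$, then $r<q/2^\kappa$, and since $r$ is an integer, $r\le\floor{q/2^\kappa}$.
\item Conversely, if $r\le\floor{q/2^\kappa}$, then $2^\kappa r\le q$. Equality cannot hold, because $q$ is odd and $2^\kappa r$ is even (as $\kappa\ge1$). So $2^\kappa r<q$.
\end{itemize}
This parity argument is the only point needing care. The hypothesis $q\ge 2^\kappa$ ensures $\floor{q/2^\kappa}\ge1$, so the set $\{1,\dots,\floor{q/2^\kappa}\}$ is nonempty and lies inside $\ZZ_q$.

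The main subtlety is interpretive rather than technical. Part (c) depends on reading Definition~\ref{DEF::kappa} as a sharp dichotomy: non-predictive is better exactly for $p\le\kappa$ and predictive exactly for $p>\kappa$. Once this is fixed, the argument is purely the arithmetic above.
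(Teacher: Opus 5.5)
Your proof is correct and follows essentially the same route as the paper. In (a) you use the parity obstruction $q\neq 2^{\sigma(q)}$; in (b) you argue directly from (a) and the minimality in the definition of the flying time; in (c) you turn $r\le\floor{q/2^{\kappa}}$ into $2^{\kappa}r<q$ using that $q$ is odd, then apply Remark~\ref{REM::FlyingTimeBounds} and Definition~\ref{DEF::kappa} with the same sharp-dichotomy reading of $\kappa$ that the paper uses implicitly. The only cosmetic difference is that you put the parity step on $2^{\kappa}r$ being even, whereas the paper uses that $q/2^{\kappa}$ is not an integer, so $2^{\kappa}\floor{q/2^{\kappa}}<q$.
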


\begin{remark}
The assumption $q \geq 2^{\kappa}$ in the above lemma implies that
$\elasticfloor{\frac{q}{2^{\kappa}}} \geq 1$ and, hence,
$\pi_q(1)$ always must be computed with the predictive algorithm.

On the other hand, the fact that $\kappa \leq 64$ is a consequence
of the (implicit) assumption that $q$
is an unsigned integer of 64 bits.
Equivalently, $q \leq 2^{64} -1$ and, consequently,
$q \geq 2^{\kappa}$ implies $k < 64.$
\end{remark}

\begin{proof}[Proof of Lemma~\ref{LEM::qkappabounds}]
Let $p\in \N$ be the largest value such that $2^p\leq q$.
Since $q$ is odd this implies that $p$ is the largest positive integer such that $2^p < q$ or, equivalently, the largest positive integer such that $p < \log_2(q)$.
Since $q \geq 5,$ $p = \floor{\log_2(q)} = \sigma(q).$
This proves Statement~(a).

Statement~(b) follows directly from Statement~(a) and Definition~\ref{DEF::qPoincare}.

Now we prove Statement (c).
Take $r \in \bigl\{1,2,\dots,\elasticfloor{\frac{q}{2^{\kappa}}}\bigr\}.$
Taking into account that $q$ is odd we have
\[
   2^{\kappa}\cdot r \leq
       2^{\kappa} \elasticfloor{\frac{q}{2^{\kappa}}} <
       2^{\kappa} \frac{q}{2^{\kappa}} = q.
\]
Hence (see Remark~\ref{REM::FlyingTimeBounds}), the flying time of $\pi_q(r)$ is at least $\kappa + 1$
and, by Definition~\ref{DEF::kappa}, the predictive algorithm to compute $\pi_q(r)$ is more efficient than
the non-predictive one.
Conversely, assume that $r > \elasticfloor{\frac{q}{2^{\kappa}}}.$
Since $r$ is an integer we have
\[
   2^{\kappa}\cdot r > 2^{\kappa}\frac{q}{2^{\kappa}} = q.
\]
Again by Remark~\ref{REM::FlyingTimeBounds}
the flying time of $\pi_q(r)$ is at most $\kappa$,
and the non-predictive algorithm is more efficient than the predictive one.
\end{proof}


Lemma~\ref{LEM::qkappabounds} and Definition~\ref{DEF::kappa}
give a new strategy (Algorithm~\ref{ALGO::ElBo})
for computing periods of the doubling map.
It consists of determining
the best algorithm (predictive or naive) to compute
an iteration of the $q-$Poincar\'e integer doubling map $\pi_q(r)$
in terms of $r$ and the critical efficiency limit for the flying time $\kappa$.

\begin{algorithm}
\caption{\hspace*{1em}\parbox{32em}{%
Efficient combination of the predictive and non predictive algorithms
to compute the period of $\frac{1}{q}$ by the doubling map for large values of $q$
\\[4pt]
It uses the function \textsc{\_floor\_log2} which is an efficient implementation
of the evaluation of $\floor{\log_2(\cdot)}$}}\label{ALGO::ElBo}
\algdef{SE}[DOWHILE]{Do}{EndDo}{\algorithmicdo}[1]{\algorithmicwhile\ #1}%
\begin{algorithmic}
\item[\textbf{Define:}] $\kappa\in \{1,2,3,\dots,63,64\}$ as internal global parameter
\Ensure $q \geq \max\bigl\{5, 2^{\kappa}\bigr\}$ odd
\Procedure{\_\_efficient\_computation\_of\_the\_period\_of\_one\_divided\_by}{$q$}
\State $qh \gets \tfrac{q-1}{2}${\color{blue}\Comment{\footnotesize{}Initialisation of global variables}}
\State $r_{\textsf{\tiny boundary}} \gets \elasticfloor{\frac{q}{2^{\kappa}}}${\color{blue}\Comment{\parbox[t]{28em}{\footnotesize{}
                                                                               Discriminator bound between predictive and non-predictive algorithms
                                                                               to compute the iterates of the $q-$Poincar\'e integer doubling map
                                                                               (Lemma~\ref{LEM::qkappabounds}(c))}}}
\State $t \gets \Call{\_floor\_log2}{q}${\color{blue}\Comment{\parbox[t]{18em}{\footnotesize{}Initialising the $q$-Poincaré doubling map iteration
                                                                                    by computing $r = \pi_q(1)$ (see Lemma~\ref{LEM::qkappabounds}(b))}}}
\vspace*{-1.5ex}
\State $a \gets 2^{t}$
\State $r \gets a - (q - a)$
\State $\text{\sffamily{}period} \gets t + 1$
\Statex
\While{$r \neq 1$}
   \If{$r > r_\textsf{\tiny boundary}$}{\color{blue}\Comment{\footnotesize{}Choosing between the predictive and non-predictive algorithms}}
       \State $t \gets 0${\color{blue}\Comment{\parbox[t]{13em}{\footnotesize{}Preparing the computation of $\pi_q(r)$ by the non-predictive algorithm}}}
       \vspace*{-1.5ex}
       \State $a \gets r$
       \While{$a \leq qh$}
          \State $a \gets a + a$
          \State $t \gets t + 1$
       \EndWhile
   \Else
       \State $t \gets \textsc{\_floor\_log2}\Bigl(\elasticfloor{\tfrac{q-1}{r}}\Bigr)${\color{blue}\Comment{\parbox[t]{13em}{\footnotesize{}Preparing
                                                                                           the computation of $\pi_q(r)$ by the predictive algorithm}}}
       \vspace*{-1.5ex}
       \State $a \gets r\cdot 2^{t}$
   \EndIf
   \Statex\vspace*{-2ex}
   \State $r \gets a - (q - a)${\color{blue}\Comment{\footnotesize{}The effective computation of $\pi_q(r)$}}
   \State $\text{\sffamily{}period} \gets \text{\sffamily{}period} + t + 1$
\EndWhile
\State \Return \text{\sffamily{}period}
\EndProcedure
\end{algorithmic}
\end{algorithm}

The above algorithm is implemented in a program in C language called

\centerline{\texttt{\color{blue}compute\_all\_periods\_in\_a\_range\_of\_odd\_qs.c}}

\noindent which is freely available at the public repository

\centerline{\color{blue}\url{https://github.com/CombTopDynamics-cat/Mersenne-doubling.git}}

The program \texttt{\color{blue}compute\_all\_periods\_in\_a\_range\_of\_odd\_qs.c}
is nothing else than Algorithm~\ref{ALGO::ElBo}
after checking that both endpoints of the interval of $q$'s verify
$q \geq \max\bigl\{5, 2^{\kappa}\bigr\}$.
Moreover, it is adaptative in the sense that if the first endpoint of the
interval of $q$'s is larger than the second one,
then the computation is done downwards from the first endpoint to the second one.

The code of the program \texttt{\color{blue}compute\_all\_periods\_in\_a\_range\_of\_odd\_qs.c}
includes our (very efficient --- we believe) implementation of the function
\texttt{\color{blue}\_floor\_log2} that computes $\floor{\log_2(\cdot)}$ for unsigned integers of 64 bits.
Computing $\floor{\log_2(u)}$ is equivalent to determining the
largest exponent of a power of two that is smaller than or equals to $u$.
This, in turn, is equivalent to determining the position of the dominant one in the binary
expansion of $u$. Thus, our strategy to compute $\floor{\log_2(u)}$ consists in looking at
the binary expansion of $u$ and determining the position of the dominant one.
The implementation is based on an auxiliary table of 256 entries, that gives
the positions of the dominant ones for all non-negative integers from 0 to 255
(that is, for all non-negative integers that fit in one byte).
Then, the determination of the position of the leftmost one in the binary expansion of $u$ is done
by using at most three bisection steps to find the byte of $u$ that contains the most
significant binary digit.
Finally, the position of this leftmost binary digit of $u$ is obtained after
some accounting with the help of the table mentioned before.

Next, we explain how the value of $\kappa$ is determined.
Indeed, we select the value of $\kappa$ empirically by optimising
the performance of the algorithm.
To carry out this optimisation, we require odd integers $q$
for which the computation of the period of $\tfrac{1}{q}$
involves a large number of iterates of the
$q$-Poincaré integer doubling map, together with
a wide statistical distribution of flying times.

Notice that the above wishful thinking it is not general at all.
Indeed, it is easy to see that the points $\frac{1}{q} \in \SI$
with $q = 2^p - 1$ have period $p$,
and \emph{this period is computed in a single iterate of the
$q-$Poincaré integer doubling map which has flying time $p$}.
In contrast, at the other end of the complexity spectrum
we can find odd values of $q$
--- that we call \emph{complete with respect to flying times} ---
such that $2^{s-1} < q < 2^s$ for some $s \in \N$,
and for each $t \in \{1,2,\dots,s\}$ there is (at least) one iterate
$\pi_q(r)$ of the $q-$Poincaré integer doubling map that has flying time exactly $t$.
Curiously enough, a normal behaviour of the
\emph{complete numbers with respect to flying times}
is that they verify a kind of Zipf's law, probably associated to the (binary)
itinerary of the $\delta-$iteration of $\tfrac{1}{q}$.
Indeed, if we denote by $\varphi_q(t)$ the absolute frequency of the
flying time $t$ when computing the $\delta-$period of $\tfrac{1}{q}$ by iterating
the $q-$Poincaré integer doubling map, the
\emph{complete numbers with respect to flying times} typically verify
\[
  \varphi_q(t) \approx 2\cdot\varphi_q(t+1)
  \qquad\text{for $t=1,2,\dots,s-1$}.
\]

To illustrate this fact consider the three \emph{complete numbers with respect to flying times}
in Table \ref{TAB::Thetable:cnwrtft}, and their flying times frequency in Table~\ref{TAB::Magick}.

\begin{table}[th]
\begin{center}
\begin{tabular}{ccc}\toprule
\multicolumn{1}{c}{\color{blue}\boldmath$q$} &
\multicolumn{1}{c}{\bfseries\color{blue}\boldmath$\ell$} &
\multicolumn{1}{c}{\bfseries\color{blue}period} \\ \midrule
4398046507391 & 42 & 2199023253695 \\
2199023246891 & 41 & 2199023246890 \\
1099511611061 & 40 & 1099511611060 \\ \bottomrule
\end{tabular}
\end{center}
\bigskip
\caption{Three \emph{complete numbers with respect to flying times} for which we compute the flying times frequency in Table~\ref{TAB::Magick} .}\label{TAB::Thetable:cnwrtft}
\end{table}

\begin{table}[th]
\begin{center}\footnotesize
\begin{tabular}{rccc}\toprule
\multirow{2}{*}{\bfseries\color{blue}t} &
\multicolumn{3}{c}{\bfseries\color{blue}flying times frequency tables} \\
& \multicolumn{1}{c}{\bfseries\color{blue}4398046507391} &
\multicolumn{1}{c}{\bfseries\color{blue}2199023246891} &
\multicolumn{1}{c}{\bfseries\color{blue}1099511611061} \\ \midrule
1  & 549754453336 & 549755811722 & 274877902765 \\
2  & 274877748292 & 274877905862 & 137438951382 \\
3  & 137438940519 & 137438952930 & 68719475692  \\
4  & 68719351661  & 68719476466  & 34359737845  \\
5  & 34359784628  & 34359738232  & 17179868923  \\
6  & 17179991043  & 17179869117  & 8589934462   \\
7  & 8589908630   & 8589934558   & 4294967230   \\
8  & 4295018499   & 4294967279   & 2147483616   \\
9  & 2147534090   & 2147483639   & 1073741807   \\
10 & 1073755996   & 1073741820   & 536870904    \\
11 & 536890826    & 536870910    & 268435452    \\
12 & 268454930    & 268435455    & 134217726    \\
13 & 134216871    & 134217728    & 67108863     \\
14 & 67109504     & 67108863     & 33554432     \\
15 & 33556351     & 33554432     & 16777215     \\
16 & 16774767     & 16777216     & 8388608      \\
17 & 8390697      & 8388608      & 4194304      \\
18 & 4193034      & 4194304      & 2097152      \\
19 & 2096089      & 2097152      & 1048576      \\
20 & 1048587      & 1048576      & 524288       \\
21 & 524142       & 524288       & 262144       \\
22 & 262084       & 262144       & 131072       \\
23 & 131180       & 131072       & 65536        \\
24 & 65565        & 65536        & 32768        \\
25 & 32649        & 32768        & 16384        \\
26 & 16458        & 16384        & 8192         \\
27 & 8190         & 8192         & 4096         \\
28 & 4066         & 4096         & 2048         \\
29 & 2055         & 2048         & 1024         \\
30 & 1005         & 1024         & 512          \\
31 & 506          & 512          & 256          \\
32 & 247          & 256          & 128          \\
33 & 129          & 128          & 64           \\
34 & 60           & 64           & 32           \\
35 & 36           & 32           & 16           \\
36 & 16           & 16           & 8            \\
37 & 9            & 8            & 4            \\
38 & 6            & 4            & 2            \\
39 & 3            & 2            & 1            \\
40 & 2            & 1            & 1            \\
41 & 1            & 1            &              \\
42 & 1            &              &              \\ \bottomrule
\end{tabular}
\end{center}
\bigskip
\caption{The flying times frequency tables of the three \emph{complete numbers with respect to flying times} in Table \ref{TAB::Thetable:cnwrtft}.
Observe that the relation $\varphi_q(t) \approx 2\cdot\varphi_q(t+1)$ is verified with {\it miraculous} precision,
specially for high values of $\varphi_q(t)$.}\label{TAB::Magick}
\end{table}

To determine the value of $\kappa$ we have run (a variant of) the program

\centerline{\texttt{\color{blue}compute\_all\_periods\_in\_a\_range\_of\_odd\_qs.c}}

\noindent{}(that implements Algorithm~\ref{ALGO::ElBo}),
with $\kappa \in \{1,2,\dots,12\}$,
on the three \emph{complete numbers with respect to flying times} discussed above.
In Figure~\ref{FIG::kappacomputimes} below, we show the obtained computation times
for the different values of $\kappa$.
Notice that the plot of the computation times versus
$\kappa$ shows the expected ``U'' shaped graph, and
indicates that the best performing $\kappa$ value is $\kappa = 2.$

\begin{figure}[ht]
\begin{center}
\begin{tikzpicture}
    \begin{axis}[xlabel={\textcolor{blue}{$\kappa$}}, ylabel={\textcolor{blue}{seconds}}]
        \addplot[red, mark color=red, mark=*] coordinates { (1, 15783.3) (2, 12650) (3, 12834.7) (4, 13662.2)
                               (5, 14495.9) (6, 15061.4) (7, 15383.2) (8, 15597.2)
                               (9, 15703.7) (10, 15769.1) (11, 15804.1) (12, 15817.4)};
     \end{axis}
\end{tikzpicture}
\end{center}
\caption{Computation times, for different values of $\kappa$,
         of the periods of three \emph{complete numbers with respect to flying times}.
         Observe the expected ``U shaped'' graph and that the best performance $\kappa$
         value is two.
         These computations have been done in parallel in a server
         Dell PowerEdge R6625 with 32 AMD EPYC 9124 16-Core Processor at 3000MHz and 2 threads per core.
         The RAM memory is DDR5 at 4800 MT/s, and 12 memory channels per processor.}\label{FIG::kappacomputimes}
\end{figure}

To test Algorithm~\ref{ALGO::ElBo} (and the $\kappa=2$ determination) we have computed
a number of periods of odd $q$'s of different magnitudes.
In Table~\ref{TAB::Thetable} one can find a representative list of periods,
together with the computation times in seconds.
Additionally, in

\centerline{\color{blue}\url{https://github.com/CombTopDynamics-cat/Mersenne-doubling.git}}

\noindent one can find the following four categorised files of computed periods
(in '\texttt{.gz}' or '\texttt{.bz2}' format and with self descriptive names):
\begin{itemize}
\item \texttt{\color{blue}Large\_PrimePeriods\_segmented\_and\_sorted.lis}:
      contains 12,135 odd values of $q$ with the corresponding periods, that turned out to be prime numbers.
      Every one of the pairs $(q, \text{period})$ in this file provides an example of a
      Mersenne number $M(\text{period})$ with prime exponent which is composite (because it has a divisor $q$).
      The file is sorted with respect to periods.
      Large means that the periods in the file are larger than 136,279,841 which is the exponent
      of the last known Mersenne Prime in the \emph{List of Known Mersenne Prime Numbers} \cite{GIMPS}.
\item \texttt{\color{blue}RelativelySmallPrimePeriods\_segmented\_and\_sorted.lis}:
      contains 121,104 odd values of $q$ with the corresponding periods, that turned out to be prime numbers.
      As the previous file every pair $(q, \text{period})$ in this file provides an example of a
      composite Mersenne number with prime exponent.
      The file, again, is sorted with respect to periods.
\item \texttt{\color{blue}OddPeriods\_NotPrime\_segmented\_and\_sorted.lis}:
      contains 2,199,404 odd values of $q$ with the corresponding periods, that turned out to be odd but not prime.
      Again, the file is sorted with respect to periods.
\item \texttt{\color{blue}EvenPeriodsDataBase\_segmented\_and\_sorted\_with\_respect\_to\_Qs.dat}: it
      contains 32,734,191 odd values of $q$ with the corresponding periods, that turned out to be even.
      This file, of lesser interest, is sorted with respect to the values of $q$.
\end{itemize}
The word \emph{segmented} in the filenames refers to the column \texttt{\color{blue}\#segm}.
When a register has the first field (\texttt{\color{blue}\#segm}) equals to $s$,
the number $q$ in the register verifies $2^{s-1} < q < 2^{s}.$

\begin{table}[th]
\begin{center}
\begin{tabular}{lrl}\toprule
\multicolumn{1}{c}{\color{blue}\boldmath$q$} &
\multicolumn{1}{c}{\bfseries\color{blue}period} &
\multicolumn{1}{c}{\bfseries\color{blue}computation time} \\ \midrule
4398046508903 & 2199023254451 & \parbox{6.5em}{\raggedleft$4979.3$} \\
4398046507367 & 2199023253683 & \parbox{6.5em}{\raggedleft$4994.4$} \\
2199023252039 & 1099511626019 & \parbox{6.5em}{\raggedleft$2512.6$} \\
2199023249903 & 1099511624951 & \parbox{6.5em}{\raggedleft$2493.28$} \\
2199023247479 & 1099511623739 & \parbox{6.5em}{\raggedleft$2511.33$} \\
2199023246807 & 1099511623403 & \parbox{6.5em}{\raggedleft$2497.67$} \\
1099511615303 &  549755807651 & \parbox{6.5em}{\raggedleft$1246.68$} \\
1099511614679 &  549755807339 & \parbox{6.5em}{\raggedleft$1256.04$} \\
1099511611607 &  549755805803 & \parbox{6.5em}{\raggedleft$1247.29$} \\
1099511610839 &  549755805419 & \parbox{6.5em}{\raggedleft$1250.74$} \\ \midrule
   4291783591 &     143059453 & \parbox{6.5em}{\raggedleft$0.354289$} \\
   4291592791 &     143053093 & \parbox{6.5em}{\raggedleft$0.342082$} \\
   4291434391 &     143047813 & \parbox{6.5em}{\raggedleft$0.324896$} \\ \midrule
4398046511103 &            42 & \parbox{6.5em}{\raggedleft$0$}        \\
4398046511101 &    1938935328 & \parbox{6.5em}{\raggedleft$4.44301$}  \\
4398046511099 &  146553656916 & \parbox{6.5em}{\raggedleft$334.145$}                 \\
4398046511097 &    5511336480 & \parbox{6.5em}{\raggedleft$12.4794$}  \\ \bottomrule
\end{tabular}
\end{center}
\bigskip
\caption{A representative list of periods computed with Algorithm~\ref{ALGO::ElBo}
(of course with the right determination of $\kappa$), together with the computation times in seconds.
These computations have been done in a server
Dell PowerEdge R6625 with 32 AMD EPYC 9124 16-Core Processor at 3000MHz and 2 threads per core.
The RAM memory is DDR5 at 4800 MT/s, and 12 memory channels per processor.
Observe that the computation of periods for values $q > 2^{40}$ starts to be demanding in general.}\label{TAB::Thetable}
\end{table}


\subsection{Detecting composite Mersenne numbers by finding one of its divisors}

This application of Theorem~\ref{MainTheorem::A} and the algorithms described in the previous section
consists in studying the (non) primality of $M(n)$ without the explicit computation of $M(n)$.
This is significantly useful for large values of $n$ since,
the proposed methodology allows us to \emph{easily} discard
a large number of potential prime Mersenne numbers
(i.e. composite Mersenne numbers with $n$ prime)
for large values of $n$.
Here, \emph{large} means larger than the exponent
of the 52th Mersenne Prime, $M(136,279,841)$ \cite{GIMPS},
which has about 41 million decimals digits.

The ultimate goal is, of course, to discover new Mersenne primes.
However, having an efficient method to rule out candidates can be equally valuable.
Indeed, an effective algorithm for identifying composite Mersenne numbers
reduces the pool of candidates that need to be tested for primality.

Our \emph{Mersenne non-primality detector} works as follows:
\begin{enumerate}[\bfseries{}Step 1:]\setcounter{enumi}{-1}
\item Choose an odd number $q$ such that
      $q \geq \max\bigl\{5, 2^{\kappa}\bigr\} = 5$
      (recall that we aim at using big magnitude numbers), and
      $q \leq 2^{64}-1$ (because in the current application we are using unsigned integers of 64 bits).
\item Compute the $\delta-$period $n$ of $\frac{1}{q}\in \SI$ (Algorithm~\ref{ALGO::ElBo}).
\item Check if $n$ is a prime number (otherwise, $M(n)$ is, trivially, composite).
\item In the affirmative, Theorem~\ref{MainTheorem::A}(a) tells us that $q$ is a divisor of $M(n)$.
      Thus, $M(n)$ is composite with $n$ being prime.
\end{enumerate}

\begin{remark}
The limitation $q \leq 2^{64}-1$ could, in principle, be overcome by using arbitrary-precision arithmetic.
However, as observed in Table~\ref{TAB::Thetable}, the computation of periods for values $q > 2^{40}$
starts to be really demanding.
In other words the data-type bound $q \leq 2^{64}-1$ can certainly be overcome but not at a cheap price.
\end{remark}

In Table \ref{table:eleven} we list the first eleven lines of the file

\centerline{\texttt{\color{blue}Large\_PrimePeriods\_segmented\_and\_sorted.lis}}

\noindent{}in the public repository

\centerline{\color{blue}\url{https://github.com/CombTopDynamics-cat/Mersenne-doubling.git},}

\noindent{}with the approximate number of digits of $M(\mbox{period})$ added in the third column.
Recall that this file, obtained with a massive exploration using Algorithm~\ref{ALGO::ElBo},
in fact contains
12,135 odd values of $q$ with the corresponding periods, that turned out to be prime numbers.
Consequently, \emph{we have obtained 12,135 Mersenne non-prime numbers,
all of them with prime exponent larger than $136,279,841$}.

One of the most striking examples of the potential of the
\emph{Mersenne non-primality detector} is the following:
The number $q=4,398,046,508,903$ divides $M(2,199,023,254,451)$,
where $2,199,023,254,451$ is a prime number.
Notably, $M(2,199,023,254,451)$ has approximately $6.6\times 10^{11}$ decimal digits,
yet we were able to obtain a non-trivial divisor without ever computing its explicit value.
Moreover, as explained in Table~\ref{TAB::Thetable},
the CPU time required to determine that the $\delta-$period ($n=2,199,023,254,451$) of $\frac{1}{q}\in \SI$ ($q=4,398,046,508,903$)
is $4979.3$ seconds.

\begin{table}[th]
\begin{center}
\begin{tabular}{lrl}\toprule
\multicolumn{1}{c}{\color{blue}\boldmath$q$} &
\multicolumn{1}{c}{\bfseries\color{blue}period} &
\multicolumn{1}{c}{\bfseries\color{blue}\#Digits of \boldmath$M(\mbox{period})$} \\ \midrule
4398046508903 & 2199023254451 & \parbox{9em}{\raggedleft$6.6\times 10^{11}$} \\
4398046507367 & 2199023253683 & \parbox{9em}{\raggedleft$6.6\times 10^{11}$} \\
2199023252039 & 1099511626019 & \parbox{9em}{\raggedleft$3.3\times 10^{11}$} \\
2199023249903 & 1099511624951 & \parbox{9em}{\raggedleft$3.3\times 10^{11}$} \\
2199023247479 & 1099511623739 & \parbox{9em}{\raggedleft$3.3\times 10^{11}$} \\
2199023246807 & 1099511623403 & \parbox{9em}{\raggedleft$3.3\times 10^{11}$} \\
1099511615303 &  549755807651 & \parbox{9em}{\raggedleft$1.65\times 10^{11}$} \\
1099511614679 &  549755807339 & \parbox{9em}{\raggedleft$1.65\times 10^{11}$} \\
1099511611607 &  549755805803 & \parbox{9em}{\raggedleft$1.65\times 10^{11}$} \\
1099511610839 &  549755805419 & \parbox{9em}{\raggedleft$1.65\times 10^{11}$} \\
2199023253737 &  274877906717 & \parbox{9em}{\raggedleft$8.27\times 10^{10}$} \\  \bottomrule
\end{tabular}
\end{center}
\bigskip
\caption{The second column lists the eleven largest (among the 12,135) prime $\delta$-periods
found in our large-scale, though non-exhaustive, exploration using Algorithm~\ref{ALGO::ElBo}.
By Theorem A(a), the corresponding $M(\mathrm{period})$ values are composite Mersenne numbers
whose exponents are prime and significantly larger than $136{,}279{,}841$.
For comparison, the 52nd known Mersenne prime contains about $4.1\times 10^6$ digits,
whereas the numbers arising from our computations, shown in the third column,
reach sizes of up to $6.6\times 10^{11}$ digits.} \label{table:eleven}
\end{table}

Step~2 of the \emph{Mersenne non-primality detector} above deserves some attention (and a couple of comments).
First, notice that we have completely separated the efficient calculation
of a $\delta-$period from the determination of its primality.
In other words, for efficiency, we have postponed the determination
of the primality of the calculated periods to the
post-processing phase.

On the other hand,
the \emph{Mersenne non-primality detector} has been formulated
in terms of the individual calculation
of $\delta-$periods but, as already explained,
is in fact based on a massive quest
for prime $\delta-$periods.
In other words,
in the post-processing phase we need to do
a massive check of the primality of the calculated periods,
thus benefiting from the decision already made
to postpone the determination of the primality of the $\delta-$periods to the
post-processing phase.

It turns out that a massive determination of the primality of periods
smaller than or equals to
$$
965,211,250,482,432,409 = 982,451,653^2 > 2^{59}
$$
is very easy and quick, by using the naivest possible algorithm
with the help of the public table containing the first
fifty million primes, whose last member is $982,451,653$
(taken from {\color{blue}\emph{The PrimePages: prime number research \& records}
\url{https://t5k.org/}}).
This is implemented in the program

\centerline{\texttt{\color{blue}IsPrime\_geek.c}}

\noindent{}posted (as the rest of programs and data from this paper) at the public repository

\centerline{\color{blue}\url{https://github.com/CombTopDynamics-cat/Mersenne-doubling.git}}

\noindent{}As already explained, the program \texttt{\color{blue}IsPrime\_geek.c}
determines the primality of a (big) set of positive integers
with the help of a built-in (at compile time) table with all 49,999,999
(sorted) prime numbers from 3 to $982,451,653$.
The algorithm to determine the primality of a candidate $n$ between 3 and
$965,211,250,482,432,409$ is divided into three cases:
\begin{enumerate}[\bfseries{Case} 1:]
\item If $n$ is even it is not prime.
\item If $n$ is odd, $n \leq 982,451,653$, then the primality of $n$ is determined
      by a binary search of $n$ in the built-in list of
      49,999,999 consecutive sorted primes from 3 to $982,451,653$.
\item If $n$ is odd and  $982,451,653 < n \leq 965,211,250,482,432,409 = 982,451,653^2$,
      then $\sqrt{n} \leq 982,451,653.$ Hence, $n$ is composite if and only if
      there exists a prime number in the table, smaller than or equals to $\sqrt{n}$,
      dividing $n$.
\end{enumerate}

\subsection{Conclusions}
We end this section with several remarks and consequences drawn from the preceding discussion, where we developed the algorithms used to detect large — indeed, extremely large — composite Mersenne numbers and presented some notable computational results.

On the one hand, the smallest prime $\delta-$period in the file

\centerline{\texttt{\color{blue}Large\_PrimePeriods\_segmented\_and\_sorted.lis}}

\noindent is $143,047,813$ (that corresponds to $q = 4,291,434,391$).
An artifact of our search procedure is that, at this moment,
we do not know which prime numbers (strictly) between $136,279,841$ and $143,047,813$ happen to be $\delta-$periods of $\frac{1}{q}\in\SI$ for some odd $q$.
This would give a whole range of composite Mersenne numbers with prime exponents.
Furthermore, we also do not know (and probably is false) whether the periods from
that file are consecutive in the set of prime numbers.
In the affirmative we would have again a range of composite Mersenne numbers with prime exponents. These ideas give a possible way to greatly improve the conclusions of our search:
focus on ranges of consecutive prime periods rather on individual periods themselves.

On the other hand, Table~\ref{TAB::Thetable} contains several messages.
One of them is that the periods of the doubling map and their computation times,
in general do not depend on the magnitude of the odd number $q$.
To be more specific observe that
$q = 4,398,046,511,103 = 2^{42}-1$ has period $42$
(that is, $\frac{1}{q}$ has period $42$ under the doubling map),
but $q = 4,398,046,511,101 = 2^{42}-3$ has period $1,938,935,328$.
This illustrates the extreme dependence to initial conditions of the orbits of the doubling map
--- a very well known phenomena (see \cite{Bob_Book}).
However, the computation times are very small in both cases,
despite the fact that the second period is about $46,165,126$ times larger than the first.
This observation has a twofold interpretation.
On the one hand, although the second period is enormous,
its associated dynamical itinerary must possess a certain
simple internal structure.
Equivalently, the number of iterates of the
$q-$Poincaré integer doubling map must be small \emph{in some sense}.
On the other hand, the algorithm used to compute the periods
is extremely efficient for certain ``simple'' periods (itineraries).

\section{On the primality of Mersenne numbers with prime exponent}\label{SEC::Prime_Mersenne_numbers}

In Section~\ref{SEC::Algos}, we presented the algorithms developed to compute the period of a given element $\frac{1}{q} \in \SI$, where $q$ is an odd integer. We then applied these methods to the detection of composite Mersenne numbers with large prime exponents, many of them far exceeding the exponent of the 52nd known Mersenne prime.

In the present section, we turn to the central problem about Mersenne numbers: the development of a primality test capable of determining whether a Mersenne number with prime exponent is itself prime. Within the framework of our approach — namely, through the algorithms introduced in Section~\ref{SEC::Algos} — this problem becomes substantially more difficult. Indeed, for large values of $n$, the method requires the computation of $\delta$-periods associated with values of $q$ that are significantly larger than those considered previously. Consequently, both the computational complexity and the arithmetic intricacy of the problem increase dramatically.

\subsection{Preliminary results}

Before presenting the primality test and concluding with few remarks we state and prove (for shake of completeness) results relating the period doubling map $\delta$ and some well-known number theoretical propositions.

\begin{lemma}\label{lemma:period}
Let $q \in \N$ be an odd number and let $p \in \{1,2,\dots,q-1\}$ be such that $\gcd(p,q) = 1$.
Then, $\tfrac{p}{q}\in \SI$ is $\delta-$periodic, and
\[
\operatorname{Orb}_{\delta}\bigl(\tfrac{p}{q}\bigr) = \elasticset{\frac{r_k}{q}}{r_k \in \{1,2,\dots,q-1\}\text{ and }\gcd(r_k,q) = 1}.
\]
\end{lemma}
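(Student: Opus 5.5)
The plan is to work entirely with residues modulo $q$, in the spirit of the $q-$integer doubling map $\Delta_q$ of Definition~\ref{DEF::IntegerDoubling}, and to read the displayed identity as saying that every point of $\operatorname{Orb}_{\delta}\bigl(\tfrac{p}{q}\bigr)$ has the form $\tfrac{r_k}{q}$ with $r_k\in\{1,\dots,q-1\}$ and $\gcd(r_k,q)=1$. I do not intend to prove that the orbit is the whole set of reduced fractions with denominator $q$, because that is false in general. For instance, for $q=7$ the orbit of $\tfrac17$ is $\bigl\{\tfrac17,\tfrac27,\tfrac47\bigr\}$, which misses $\tfrac37,\tfrac57,\tfrac67$. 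So the statement is an inclusion describing the shape of the orbit points, and I will prove exactly that.

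\emph{Step 1 (periodicity).} Since $q$ is odd and $\gcd(p,q)=1$, Lemma~\ref{LEM::CarlesonGamelin}(b) gives immediately that $\tfrac{p}{q}$ is $\delta-$periodic. For a self-contained argument I would add the following. Because $\gcd(2,q)=1$, Euler's theorem gives $2^{\varphi(q)}\equiv 1 \pmod{q}$. Hence
\[
\delta^{\varphi(q)}\bigl(\tfrac{p}{q}\bigr)=\tfrac{2^{\varphi(q)}p}{q}\pmod{1}=\tfrac{p}{q},
\]
so $\tfrac{p}{q}$ is a fixed point of $\delta^{\varphi(q)}$ and therefore periodic.

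\emph{Step 2 (form of the iterates).} By induction on $k$, I will show that $\delta^k\bigl(\tfrac{p}{q}\bigr)=\tfrac{r_k}{q}$, where $r_k$ is the representative of $2^k p$ modulo $q$ in $\{0,1,\dots,q-1\}$. This holds because reducing $\tfrac{2^k p}{q}$ modulo $1$ is the same as reducing the numerator $2^k p$ modulo $q$.

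\emph{Step 3 (coprimality).} Since $q$ is odd, every prime divisor of $q$ is odd. Such a prime divides neither $2^k$ nor $p$, so it does not divide $2^k p$. Therefore $\gcd(r_k,q)=\gcd(2^k p,q)=1$. In particular $r_k\neq 0$, because $q\ge 3$: $q$ is odd and the set $\{1,\dots,q-1\}$ is nonempty. Hence $r_k\in\{1,\dots,q-1\}$, which establishes the claimed description of the orbit.

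The argument is routine. The only real obstacle is the interpretive one: the ``$=$'' must be understood as describing the elements of the orbit rather than asserting equality with the full set of reduced residues. Equality holds precisely when $2$ generates $(\Z/q\Z)^\times$, which is not the case in general. If the authors intend the full-set version, I would state that extra hypothesis explicitly and use the fact that $k\mapsto 2^k p$ then runs through all units modulo $q$.
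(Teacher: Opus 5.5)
Your proof is correct and follows essentially the paper's argument: periodicity from Lemma~\ref{LEM::CarlesonGamelin}, the iterates written as $\tfrac{r_k}{q}$ with $r_k$ the residue of $2^k p$ modulo $q$, and coprimality because an odd common divisor of $r_k$ and $q$ would divide $2^k p$ and hence $p$. You read the displayed ``$=$'' as describing the orbit points rather than as equality with all reduced fractions, and your $q=7$ example correctly rules out the latter; this inclusion is exactly what the paper's proof establishes, and your explicit check that $r_k\neq 0$ supplies a small step the paper skips when it takes $r_k\in\{1,\dots,q-1\}$ straight from the division.
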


\begin{proof}
Since $q$ is odd, we know from Lemma~\ref{LEM::CarlesonGamelin} that $\tfrac{p}{q}\in \SI$ is $\delta-$periodic.
For every integer $k\geq 0$ we can write
\begin{equation}\label{eq:div}
      2^k p = q \alpha_k + r_k
\end{equation}
for some $\alpha_k \in \Z^+$ and $r_k \in \{1,2,\ldots, q-1\}$.
Thus,
\[
\operatorname{Orb}_{\delta}\bigl(\tfrac{p}{q}\bigr) =
    \elasticset{2^k \tfrac{p}{q} \pmod{1}}{k \in \Z^+} =
    \elasticset{\frac{r_k}{q}}{r_k \in \{1,2,\dots,q-1\}}.
\]

By way of contradiction assume that $\gcd(r_k, q) = \ell > 1$ for some
$r_k$ such that $\frac{r_k}{q} \in \operatorname{Orb}_{\delta}\bigl(\tfrac{p}{q}\bigr).$
This implies that $\ell$ divides $q$ and $r_k$. Moreover, since $q$ is odd, $\ell$ is also odd.
From \eqref{eq:div} we conclude that $\ell$ divides $p$; a contradiction with $\gcd(p, q) = 1$.
\end{proof}

Lemma above can be used to prove the following well-known number theoretical statement, using the $\delta$-map.

\begin{lemma} \label{lem:number_doubling}
Let $n>2$ be a prime number.
\begin{enumerate}
\item[(a)]  $2^{n-1} = 1 \, \pmod{n}$. (Fermat's little Theorem)
\item[(b)] If $q>2$ is a prime divisor of $M(n)$ then $q=1+2n\ell$ for some $\ell\in \N$.
\item[(c)] If $q>2$ is a prime divisor of $M(n)$ then $q = \pm 1  \, \pmod{8}$.
\end{enumerate}
\end{lemma}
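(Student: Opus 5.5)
The plan is to interpret everything through the $\delta$-orbit of $\tfrac{1}{q}$, using Lemma~\ref{lemma:period} and Theorem~\ref{MainTheorem::A}(a).

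\emph{Part (a).} Since $n$ is an odd prime, Lemma~\ref{lemma:period} applies with $q=n$. The orbit of $\tfrac1n$ is contained in $\{\tfrac{r}{n} : 1\le r\le n-1\}$, a set of $n-1$ points. I would show that $\delta$ permutes this set: the map $r\mapsto 2r \pmod n$ is injective on $\{1,\dots,n-1\}$ because $n$ is odd. So the set splits into $\delta$-periodic orbits.

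Next I would show that every orbit in this set has the same length $k$, the period of $\tfrac1n$. The point is that $\tfrac{r}{n}$ and $\tfrac1n$ have the same period, since $2^j r\equiv r \pmod n$ if and only if $2^j\equiv 1\pmod n$, as $r$ is invertible mod $n$. Hence $k$ divides $n-1$, so $\delta^{n-1}(\tfrac1n)=\tfrac1n$. This is exactly $2^{n-1}\equiv1\pmod n$. This orbit-partition step is the only genuine content, and it is the Lagrange argument in dynamical form.

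\emph{Part (b).} Let $q$ be an odd prime divisor of $M(n)$. By Theorem~\ref{MainTheorem::A}(a), $\tfrac1q$ is a fixed point of $\delta^n$. Its period $k$ therefore divides $n$, and $k\neq1$ because $0$ is the only fixed point of $\delta$. Since $n$ is prime, $k=n$. Applying part (a)'s argument with $q$ in place of $n$ shows that $n=k$ divides $q-1$. So $q=1+n m$, and $m$ is even because $q$ and $n$ are both odd. This gives $q=1+2n\ell$.

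\emph{Part (c).} This is the step I expect to be the main obstacle, because it needs the second supplement to quadratic reciprocity, which the dynamics does not obviously supply. Since $2^n\equiv1\pmod q$ with $n$ odd, we get $2\equiv 2^{n+1}=(2^{(n+1)/2})^2\pmod q$, so $2$ is a quadratic residue mod $q$. I would then invoke the classical fact that $\bigl(\tfrac{2}{q}\bigr)=1$ if and only if $q\equiv\pm1\pmod 8$. For a self-contained route, I would try Gauss's lemma: count the points $\tfrac{r}{q}$ with $r=1,\dots,\tfrac{q-1}{2}$ whose image under $\delta$ lands in $(\tfrac12,1)$. This count is the number of $r$ in $(\tfrac{q}{4},\tfrac{q}{2})$, and one checks that it is even exactly when $q\equiv\pm1\pmod8$.
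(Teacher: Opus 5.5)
Your proposal is correct. For (b) and (c) it is essentially the paper's argument. In (b) the paper also splits the $q-1$ points $\tfrac{j}{q}$ into $\delta$-orbits of length $n$ to get $n\mid q-1$; you additionally make explicit, via invertibility of $r$ modulo $q$, why all these orbits have the same length. In (c) the paper likewise uses $2^{(n+1)/2}$ as a square root of $2$ modulo $q$ and quotes the second supplement to quadratic reciprocity.

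The real difference is in (a). The paper counts the $2^n-2$ nonzero fixed points of $\delta^n$ listed in \eqref{eq:per_points}. Since $n$ is prime they all have exact period $n$, so $n\mid 2^n-2$, and the oddness of $n$ then gives $n\mid 2^{n-1}-1$. That is the necklace proof, in which primality of $n$ enters as a \emph{period}. You work instead at denominator $n$, where primality enters as invertibility of every residue, and run Lagrange's argument: $\delta$ permutes $\{\tfrac{r}{n}\}$ in cycles of one common length, which must divide $n-1$. Your route makes (a) and (b) literally the same argument, since the paper's (b) is your (a) with $q$ in place of $n$. It also extends verbatim, through Lemma~\ref{lemma:period}, to $2^{\varphi(m)}\equiv 1 \pmod{m}$ for every odd $m$, with $\varphi$ Euler's totient. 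The paper's route needs only \eqref{eq:per_points} and the observation that a nonzero fixed point of $\delta^n$ with $n$ prime has exact period $n$, which is the observation driving the rest of the paper.

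Your Gauss-lemma alternative for (c) is also sound and makes the lemma self-contained. You can even bypass quadratic residues: by (b), $\tfrac{q-1}{2}=n\ell$, so $2^{(q-1)/2}\equiv 1\pmod{q}$. The identity $2^{(q-1)/2}\equiv(-1)^{\mu}\pmod{q}$ underlying Gauss's lemma then forces your count $\mu$ of integers in $(\tfrac{q}{4},\tfrac{q}{2})$ to be even.

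One small citation point: Theorem~\ref{MainTheorem::A}(a) as stated excludes $q=M(n)$. So when $M(n)$ is itself prime you should appeal to Remark~\ref{rem:period_M_n}, or simply note that $q\mid 2^n-1$ gives $\delta^n(\tfrac{1}{q})=\tfrac{1}{q}$ directly.
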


\begin{proof}
We prove statement (a). From \eqref{eq:per_points} we know that the number of periodic points of period $n$ of the doubling map is equal to $2^{n}-2$ . Since $n$ is a prime number, all these $2^n-2$ points  have minimal period $n$, so they form $M\geq 1$ orbits of minimal period $n$. Obtaining that $2^{n}-2=n M$. Using again that $n$ is a prime number we conclude that $M$ is even and so $M/2$ is an integer number. Finally, we have that  $2^{n-1} = \frac{M}{2} n +1$, and the result follows.

We prove statement (b). Since $q>2$ is a prime divisor of $M(n)$ we know from
Lemma~\ref{lemma:period} that $\{j/q,\ j=1,\ldots q-1\}$ are $q-1$ periodic points of minimal period $n$ (notice that $\gcd(j,q)=1$). These $q-1$ periodic points form $L\geq 1$ periodic orbits of minimal period $n$, obtaining that $q-1 = Ln$. Since $q$ and $n$ are prime numbers, we have that $2$ divides $q-1$ and therefore $2$ divides $L$. Thus, $L=2\ell$ and we have that $q = 1 + 2 n   \ell$.

We prove statement(c). By assumption $q>2$ is a divisor of $M(n)=2^n-1$, or equivalently, $2^n = 1 \pmod{q}$. Multiplying by two the above equality we obtain that
$ 2^{n+1}=2 \, \pmod{q}$. Using that $n$ is a (odd) prime number we  conclude that $2^{(n+1)/2}$ solves de congruence $x^2 \equiv 2 \, \pmod{q}$.  Finally, by quadratic reciprocity given a prime number $q$ the congruence $x^2=2 \pmod{q}$ is solvable if and only if $n$ is congruent to $\pm 1 \pmod{8}$, and the result follows.
\end{proof}

\subsection{A primality test}

Fix $n_0\geq 3$ prime.
Our goal is to define a test which identifies all Mersenne primes
$M(j),\ j \in \{3,\ldots,  n_0\}$.

We first introduce some notation. We denote by $\N_{\rm pMd}$ (prime Mersenne divisors)
the set of natural numbers greater or equal to 3 congruent to $\pm 1  \, \pmod{8}$, and we define the set $\mathcal O(n_0)$ to be
\[
\mathcal O(n_0) := \{  q \in \N_{\rm pMd} \ | \   q \leq \lfloor \sqrt{M(n_0)} \rfloor \}
\]
For every $q \in \mathcal O(n_0)$  we denote by $n(q)$ the $\delta-$period of $\frac{1}{q}\in \SI$. We also define
\[
A(j):=\{q \in \mathcal O(n_0) \, | \,   n(q)=j \}, \ j=3,\ldots, n_0,
\]
and denote by $\lvert A(j) \rvert$ its cardinality. Next lemma characterises the prime Mersenne numbers below any given $n_0\geq 3$. So, it defines a primality test.

\begin{lemma}\label{lem:primality_test}
Let $n_0\geq 3$ be a prime number. Set
\[
v(j) := \lvert A(j) \rvert.
\]
Then, for every $j\in\{3,\ldots,n_0\}$ prime we have that  $M(j)$ is prime if and only if $v(j)=0$, or $v(j)=1$ with $M(j)\leq \sqrt{M(n_0)}$.
\end{lemma}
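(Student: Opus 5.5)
The plan is to identify $A(j)$ explicitly as a set of divisors of $M(j)$, and then treat the two directions separately.

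\emph{Step 1: description of $A(j)$.} Fix $j\in\{3,\ldots,n_0\}$ prime and take $q\in\mathcal O(n_0)$. Since $q\equiv\pm1\pmod 8$, $q$ is odd and $q\geq 7$. By Theorem~\ref{MainTheorem::A}(a), together with Remark~\ref{rem:period_M_n} for the case $q=M(j)$, $q$ divides $M(j)$ if and only if $\frac1q$ is a fixed point of $\delta^j$. Such a fixed point has minimal period dividing $j$. That period cannot be $1$, because $0$ is the only fixed point of $\delta$ and $\frac1q\neq 0$. As $j$ is prime, the period is therefore exactly $j$. Hence
\[
A(j)=\elasticset{q}{q\mid M(j),\ q\geq 3,\ q\equiv\pm1 \pmod 8,\ q\leq \elasticfloor{\sqrt{M(n_0)}}}.
\]
Two further facts will be used. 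First, $M(j)=2^j-1\equiv -1\pmod 8$ because $j\geq 3$. Second, $M(j)\leq M(n_0)$ since $j\leq n_0$.

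\emph{Step 2: $M(j)$ prime.} If $M(j)$ is prime, the only divisor of $M(j)$ that is at least $3$ is $M(j)$ itself. This divisor is $\equiv -1\pmod 8$, and by Remark~\ref{rem:period_M_n} it has $\delta$-period $j$. So $A(j)=\{M(j)\}$ when $M(j)\leq\sqrt{M(n_0)}$, and $A(j)=\emptyset$ otherwise. Since $M(j)$ is an integer, the conditions $M(j)\leq\sqrt{M(n_0)}$ and $M(j)\leq\elasticfloor{\sqrt{M(n_0)}}$ coincide. This gives either $v(j)=1$ with $M(j)\leq\sqrt{M(n_0)}$, or $v(j)=0$.

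\emph{Step 3: $M(j)$ composite.} Conversely, suppose $M(j)$ is composite. Then it has a prime divisor $q$ with $q\leq\sqrt{M(j)}\leq\sqrt{M(n_0)}$, and $q<M(j)$. Since $M(j)$ is odd, $q>2$. Lemma~\ref{lem:number_doubling}(c) gives $q\equiv\pm1\pmod 8$, so $q\in A(j)$ by Step~1, and hence $v(j)\geq1$. This immediately excludes $v(j)=0$. If moreover $M(j)\leq\sqrt{M(n_0)}$, then $M(j)$ also belongs to $A(j)$ by Step~2's reasoning, and it is distinct from $q$, so $v(j)\geq 2$. Therefore neither alternative in the statement can hold.

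The only delicate point is making sure the count in $A(j)$ is not affected by composite $q$ or by the self-divisor $M(j)$. The description in Step~1 handles this. It relies on the primality of $j$ to upgrade ``fixed point of $\delta^j$'' to ``period exactly $j$'', and on the congruence of Lemma~\ref{lem:number_doubling}(c) to ensure that small prime factors are never filtered out of $\mathcal O(n_0)$.
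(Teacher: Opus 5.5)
Your proof is correct and uses the same ingredients as the paper's: Theorem~\ref{MainTheorem::A} turns ``$\frac{1}{q}$ has period $j$'' into ``$q\mid M(j)$'' (the primality of $j$ excludes smaller periods), Lemma~\ref{lem:number_doubling}(c) guarantees that a prime factor $q\le\sqrt{M(j)}$ of a composite $M(j)$ lies in $\mathcal O(n_0)$, and the self-divisor $M(j)$ accounts for the case $v(j)=1$. Building everything on your explicit description of $A(j)$ makes rigorous two points the paper only asserts (that $M(j)\in\mathcal O(n_0)$ because $M(j)\equiv -1\pmod{8}$, and that no $q\neq M(j)$ in $\mathcal O(n_0)$ has period $j$ when $M(j)$ is prime), and the one case you leave implicit, $q>M(j)$, lies outside Theorem~\ref{MainTheorem::A}(a) as stated but is trivial, since $\delta^j(\frac{1}{q})=\frac{1}{q}$ if and only if $q\mid 2^j-1$ for every odd $q\ge 3$.
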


\begin{proof}
Fix $j\in\{3,\ldots, n_0\}$. From Lemma~ \ref{lem:number_doubling}, it follows that the only possible odd divisors of Mersenne numbers
$M(n)$ with $n\leq n_0$ are those $q\in \mathcal O(n_0)$.

We first assume that $M(j)$ is a prime number. If $M(j) > \sqrt{M(n_0)}$ Theorem~\ref{MainTheorem::A}(b) implies that for every $q \in \mathcal O(n_0)$ the period $n(q)$  of $\theta=\frac{1}{q}$  under the doubling map verifies that  $n(q) \neq j$ and thus $v(j)=0$. If $M(j) \leq \sqrt{M(n_0)}$ then $M(j)\in \mathcal O(n_0)$ and $\theta=\frac{1}{q}$ with $q=M(j)$ has period $j$ under the doubling map. Thus $v(j)=1$ (and $M(j) \leq \sqrt{M(n_0)}$).

To prove the converse implication, we first assume that $v(j)=0$. In this case, for all $q\in\mathcal O(n_0)$ we have that $\theta=\frac{1}{q}$ has period $n(q)\ne j$. Therefore, by Theorem~\ref{MainTheorem::A}(b), it follows that $M(j)$ is prime. Now we suppose that $v(j)=1$ and that $M(j) \leq \sqrt{M(n_0)}$. In this situation, the only $q\in \mathcal O(n_0)$ for which $\theta=\frac{1}{q}$ has period $n(q)=j$ is $q=M(j)$. Hence, the only divisor of $M(j)$ different from 1 is $M(j)$ itself, which implies that $M(j)$ is prime.
\end{proof}

\begin{remark}
Observe that the case $v(j)\geq 2$ immediately implies that $M(j)$ is composite, since it guarantees the existence of at least one divisor $q\ne M(j)$.
\end{remark}

As an example we show the primality test for $n_0=31$. We have $\lfloor \sqrt{M(31)} \rfloor =46340$ and so we need to consider $ q$ in $\mathcal O(n_0) = \{  \pm 1 + 8 \ell \, , \,   \ell =1, \ldots, 5793\}$.  The output of the test gives
\begin{itemize}
\item[(a)] $v(3)=v(5)=v(7)=v(13)=1$ with $M(j)<\lfloor \sqrt{M(31)} \rfloor,\ j=3,5,7,13$.
\item[(b)] $v(11)=3$.
\item[(c)] $v(17)=v(19)=0$ (notice that $M(17)>\lfloor \sqrt{M(31)} \rfloor$).
\item[(d)] $v(23)=1$ (with $M(23)>\lfloor \sqrt{M(31)} \rfloor$).
\item[(e)] $v(29)=3$.
\item[(f)] $v(31)=0$.
\end{itemize}

Thus, applying Lemma~\ref{lem:primality_test}, we obtain that $M(j),\ j=3,5,7,13,17,19,31$ are prime while $M(j),\ j=11,23,29$ are composite.

\subsection{Conclusions}

As we explained in Section \ref{SEC::Algos}, using the unsigned long integers type in our algorithms bounds the potential computations we can do. In this case we can only implement the test up to $n_0=128$. This is so because the algorithm requires to check $q\in\mathcal O(n_0)$, or equivalently, $q\leq \lfloor \sqrt{M(n_0)} \rfloor$, and the long integers type requires $\lfloor \sqrt{M(n_0)} \rfloor \lesssim 2^{64}$.

\bibliographystyle{alpha}
\bibliography{biblio}

\end{document}